\newtheorem{theorem}{Theorem}[section]
\newtheorem{lemma}[theorem]{Lemma}
\newtheorem{proposition}[theorem]{Proposition}
\newtheorem{corollary-definition}[theorem]{Corollary-Definition}
\newtheorem{definition}[theorem]{Definition}
\newtheorem{remark}[theorem]{Remark}
\newtheorem{conjecture*}{Conjecture}
\newenvironment{customthm}[1]
  {\innercustomthm}
  {\endinnercustomthm}
\newcommand{\qed}{\hfill$\square$}
\newcommand{\graft}{\curvearrowright}
\newcommand{\gl}{\diamond}
\newcommand{\dF}{\mathcal{F}}
\newcommand{\Tg}{\mathbf{T}_g}
\newcommand{\Fg}{\mathbf{F}_g}
\newcommand{\EF}{\mathbf{EF}}
\newcommand{\AAb}{\overline{\AA}}
\newcommand{\E}{\ensuremath{\mathbb{E}}\xspace}
\newcommand{\N}{\ensuremath{\mathbb{N}}\xspace}
\renewcommand{\P}{\ensuremath{\mathbb{P}}\xspace}
\newcommand{\R}{\ensuremath{\mathbb{R}}\xspace}
\renewcommand{\AA}{\ensuremath{\mathcal{A}}\xspace}
\newcommand{\LL}{\ensuremath{\mathcal{L}}\xspace}
\newcommand{\NN}{\ensuremath{\mathcal{N}}\xspace}
\newcommand{\OO}{\ensuremath{\mathcal{O}}\xspace}
\renewcommand{\SS}{\ensuremath{\mathcal{S}}\xspace}
\DeclareMathOperator{\Div}{div}
\DeclareMathOperator{\Real}{Re}
\DeclareMathOperator{\ddiv}{div}
\newcommand{\mode}[1]{\textcolor{blue}{#1}\index {#1}}
\newcolumntype{C}[1]{>{\centering\let\newline\\\arraybackslash\hspace{0pt}}m{#1}}
\newenvironment{proof}[1][Proof]{\begin{trivlist}
\item[\hskip \labelsep {\bfseries #1}]}{\qed \end{trivlist}}
\newenvironment{example}[1][Example]{\begin{trivlist}
\item[\hskip \labelsep {\bfseries #1}]}{\end{trivlist}}
\title{Efficient Langevin sampling \\with position-dependent diffusion}
\author[1]{Eugen Bronasco}
\author[2]{Benedict Leimkuhler}
\author[3]{Dominic Phillips}
\author[1]{Gilles Vilmart}
\affil[1]{Section of Mathematics, University of Geneva}
\affil[2]{School of Mathematics, University of Edinburgh}
\affil[3]{School of Informatics and Maxwell Institute for the Mathematical Sciences, University of Edinburgh}
\begin{document}
\maketitle

\begin{abstract}
We introduce a numerical method for Brownian dynamics with position dependent diffusion tensor
which is second order accurate for sampling the invariant measure while requiring only one force evaluation per timestep.
Analysis of the sampling bias is performed using the algebraic framework of exotic aromatic Butcher-series.
Numerical experiments confirm the theoretical order of convergence and illustrate the efficiency of the new method.
\end{abstract}
\noindent
\textit{AMS Subject Classification (2020)}: 60H35, 37M25, 65L06, 41A58, 05C05

\noindent
\textit{Keywords}: Stochastic differential equations, Brownian dynamics, invariant measure, exotic aromatic B-series, variable-coefficient diffusion

\section{Introduction}
In this paper, we consider the numerical integration of Brownian dynamics 
\begin{equation}\label{eq:SDE1}
    dX(t)= F(X(t))dt+\sigma D(X(t))^{1/2} dW(t),\qquad
    F = -D\nabla V
    +\frac{\sigma^2}2\ddiv(D),
\end{equation}
where $V$ is a smooth potential energy function with Lipschitz continuous gradient,  $W(t)$ is a standard $d$-dimensional Wiener process, $\sigma>0$ is a fixed constant and $D=D(x)$ is a $d\times d$ symmetric and positive definite diffusion matrix that is position dependent.
Such stochastic systems with multiplicative It\^o noise typically arise in 
the context of molecular dynamics \cite{Hummer2005PositiondependentDC, Best_Hummer_2010} and in particular
for overdamped Langevin dynamics, see \cite{Roberts_Stramer_2002,ReyBellet_Spiliopoulos_2016} and recently \cite{Cui_Tong_Zahm_2024,Lelievre_Pavliotis_Robin_Santet_Stoltz_2024,Lelievre_Santet_Stoltz_2024,Lelievre_Santet_Stoltz_2024B}.
Assuming that $V$ grows sufficiently rapidly at infinity and that $D$ is uniformly positive definite, the dynamics defined by  \eqref{eq:SDE1} can be shown to be ergodic and reversible and to admit a unique stationary measure $\mu_{\infty}$ having density $\rho_{\infty}(x) \propto \exp(-\frac2{\sigma^2} V(x))$ \cite[Chap.\thinspace 4]{Pavliotis_2014}.
The use of suitably chosen nonconstant diffusion has been proposed to accelerate sampling of such stationary measures \cite{Roberts_Stramer_2002,ReyBellet_Spiliopoulos_2016}, with considerable attention in the recent literature \cite{Graham_Thiery_Beskos_2022,Cui_Tong_Zahm_2024,Lelievre_Pavliotis_Robin_Santet_Stoltz_2024,Lelievre_Santet_Stoltz_2024,Lelievre_Santet_Stoltz_2024B}.
The challenge addressed in this paper is the efficient generation of samples from the stationary distribution in high dimensions by numerical  discretization of the SDE \eqref{eq:SDE1}.

\subsection{Weakly accurate integration methods for constant diffusion}
In case the diffusion tensor $D$ is the identity matrix, \eqref{eq:SDE1} reduces to the
widely studied overdamped Langevin system
\begin{equation}\label{eq:SDEadditive}
dX(t) = -\nabla V(X(t)) dt + \sigma dW(t).
\end{equation}
Setting $\sigma=\sqrt{{2}/\beta}$, we may view $\beta>0$ as a constant proportional to the inverse temperature of a heat bath.
In molecular dynamics models, the dimension $d$ may be very large, as it is proportional to the the number of particles considered in the model. The large dimension combined with possible stiffness issues make the numerical discretization of \eqref{eq:SDEadditive} particularly challenging. 

The system (\ref{eq:SDE0}) is \textit{ergodic}, admitting the unique invariant measure $\pi$ given by
\begin{equation}\label{eq:defrho}
\pi(dx) = \rho_\infty(x)dx,\quad \rho_\infty(x) = Z^{-1} e^{2\sigma^{-2}V(x)},\qquad Z=\int_{\mathbb{R}^d} e^{2\sigma^{-2}V(x)} dx,
\end{equation}
and for all initial conditions $X_0$ (assumed deterministic for simplicity) and integrable test functions $\phi$,
\begin{equation}
\label{eq:convtraj}\lim_{T\to\infty} \frac{1}{T} \int_0^T \phi(X(s)) ds = \int_{\R^d} \phi(x) d\pi(x), \quad \text{almost surely}.
\end{equation}
In addition, the expectation of $\phi$ evaluated with the solution $X(t)$ typically converges exponentially fast as $t\rightarrow +\infty$ to the corresponding equilibrium average taken with respect to the invariant measure:
\begin{equation}\label{eq:convexp}
\left |\mathbb{E}\big(\phi(X(t))\big) - \int_{\R^d} \phi(x) d\pi(x) \right | \leq Ce^{-\lambda t}
\end{equation}
for all $t>0$ and constants $C,\lambda>0$ independent of $t$ (but $C$ depends on $\phi$ and $X_0$).

We say that a numerical integrator for \eqref{eq:SDE1} or \eqref{eq:SDEadditive} has weak order $q$ if, for all smooth and integrable test functions $\phi$,
\begin{equation}\label{eq:weakq}
\left | \mathbb{E}\left (\phi(X_n)\right ) - \mathbb{E}\left (\phi(X(t_n))\right ) \right | \leq C(T) h^q,
\end{equation}
for all $t_n=nh\leq T$, all stepsizes $h$ assumed small enough, and any fixed final time $T>0$.
Assuming that it is ergodic, a numerical method has weak order $p$ for the invariant measure
if
\begin{equation}\label{eq:invp}
\left | 
\underset{N\rightarrow +\infty}{\lim\,\mathrm{a.s.}}
 \frac{1}{N+1} \sum_{i=0}^N \phi(X_i) - \int_{\R^d} \phi(x) d\pi(x) \right | \leq  C_2 h^p,
\end{equation}
and, analogously to \eqref{eq:convexp}, in many situations one can obtain the estimate
\begin{equation}\label{eq:invpexp}
\left | \mathbb{E}\left (\phi(X_n)\right ) - \int_{\R^d} \phi(x) d\pi(x) \right | \leq C_1e^{-c t_n} + C_2 h^p,
\end{equation}
for all time $t_n=nh$ with constants $c,C_1,C_2>0$ independent from $n$ and $h$.
The simplest method for solving \eqref{eq:SDEadditive} is the Euler-Maruyama method
$$
{X}_{n+1} = {X}_n + h \nabla V(X_n) + \sqrt{h} \sigma  {R_n},
$$
where $R_n \sim \mathcal{N}(0,I_d)$ are independent Gaussian increments.
Under suitable assumptions on the potential $V(x)$, the method has weak order $q=1$ in \eqref{eq:weakq}
and is also order $p=1$ in \eqref{eq:invp} and \eqref{eq:invpexp} with respect to the invariant distribution \cite{Milstein_Tretyakov_2004} and with exponentially fast convergence to equilibrium.
It was shown in \cite{Leimkuhler_Matthews,Leimkuhler_Matthews_Tretyakov} that a second order integrator with respect to the invariant measure (with $p=2$ in \eqref{eq:invp} and  \eqref{eq:invpexp}) can be obtained for \eqref{eq:SDEadditive}  with only one evaluation of the force $-\nabla V$ per timestep,
while the weak order remains $q=1$. 
This method, which we refer to here as the Leimkuhler-Matthews method, is given by the non-Markovian formulation 
\begin{equation}\label{eq:const_D_nonMarc}
    {X}_{n+1} = {X}_n - h \nabla V({X}_n) + \sqrt{h} \sigma  \frac{R_n + R_{n+1}}{2}.
\end{equation}

Based on the analysis framework of Talay and Tubaro \cite{Talay_Tubaro_1990}, it has been shown that, for large classes or ergodic SDEs, including systems with  degenerate noise and locally Lipschitz fields \cite{Mattingly_Stuart_Higham_2002},  
and for a broad class of numerical methods, \eqref{eq:weakq} yields \eqref{eq:invp} with $p=q$. 
However, certain schemes can achieve an improved order $p>q$ for sampling the invariant measure. \eqref{eq:const_D_nonMarc} is such a method with $q=1$ and $p=2$;  for any integrable test function $\phi$ and for any time step $h>0$ assumed sufficiently small, 
$$
\left | \mathbb{E}\left (\phi(X_n)\right ) - \mathbb{E}\left (\phi(X(t_n))\right ) \right | \leq C_1e^{-c t_n}  h + C_2 h^2,
$$
for some positive constants $c, C_1, C_2$ independent from $n$ and $h$.
In fact, one can achieve arbitrarily high order $p$ while the weak order remains $q=1$, as discussed in \cite{Abdulle_Vilmart_Zygalakis_15,Bou-Rabee_Owhadi_2010} in the context of splitting methods for underdamped Langevin dynamics and in \cite{Abdulle_Vilmart_Zygalakis_14} for overdamped Langevin dynamics. 
Method \eqref{eq:const_D_nonMarc} can be rewritten in Markovian form using a post-processor \cite{Vilmart_2015}, 
as
\begin{align}
    X_{n+1} &= X_n -h \nabla V(\overline{X}_n) + \sqrt{h} \sigma R_n, \nonumber \\
    \overline{X}_n &= X_n + \frac{1}{2} \sqrt{h} \sigma R_n. \label{eq:const_scheme}
\end{align}
Indeed, one can show that \eqref{eq:const_D_nonMarc} and \eqref{eq:const_scheme} are equivalent methods.  More precisely, $\overline X_n$ in  \eqref{eq:const_scheme} has the same law as $X_n$ in \eqref{eq:const_D_nonMarc} up to the initial condition choice, which can be shown by substituting $X_n = \overline{X}_n - \frac{1}{2} \sqrt{h} \sigma R_n$ 
and $X_{n+1} = \overline{X}_{n+1} - \frac{1}{2} \sqrt{h} \sigma R_{n+1}$ in the first equation of \eqref{eq:const_scheme}.
We emphasize again that the post-processed value $\overline X_n$ yields order $p=2$ of accuracy for the invariant measure, while the weak order of accuracy for $X_n$ and $\overline X_n$ remains $q=1$ for fixed final time. The reformulation~\eqref{eq:const_scheme} is inspired by the notion of ``effective order'' from the literature of deterministic Runge-Kutta methods \cite{Butcher_1969}.  A similar concept has been found to be useful in the context of stochastic ergodic systems \cite{Vilmart_2015} for the design of high-order samplers based on other schemes, for instance the implicit Euler method, which is applicable to stiff deterministic problems, has been extended to the case of ergodic parabolic SPDEs in \cite{Brehier_Vilmart_2016}.

\subsection{Position-dependent diffusion}
We now consider a symmetric $d\times d$ diffusion matrix $D(x)=(D_{ij}(x))_{i,j=1,\ldots,d}$ with columns $D_j=(D_{ij})_{i=1,\ldots,d}$ for all $j=1,\ldots d$ assumed smooth with respect to $x$.
The {\em divergence} of the smooth matrix $D(x)$ is defined as the vector whose $j$th component is the divergence
$\ddiv D_j(x) = \sum_{i=1}^d \frac{\partial D_{ij}}{\partial x_i}(x)$,
of the $j$th column $D_j$ of the diffusion matrix $D$, viewed as a vector function:
$$
\ddiv (D)  = 
\begin{pmatrix}
\ddiv D_1 \\ \vdots \\ \ddiv D_d 
\end{pmatrix}.
$$
Defining the symmetric matrix $D(x)$ in the form $D=\Sigma^T\Sigma$, one may also consider the system
\begin{equation}\label{eq:SDE0}
    dX= -(\Sigma^T\Sigma)(X)\nabla V(X)dt+\frac{\sigma^2}2\ddiv(\Sigma^T\Sigma)(X)dt+\sigma \Sigma(X) dW,
\end{equation}
which is equivalent to \eqref{eq:SDE1} with symmetric diffusion matrix $\Sigma=D^{1/2}$. 
Without loss of generality, we shall assume that $\Sigma$ is symmetric.%
\footnote{Indeed, observe that replacing $\Sigma(x)$ by the symmetric positive definite matrix $(\Sigma(x)^T\Sigma(x))^{1/2}$ in \eqref{eq:SDE0} does not change the law of the solution $X(t)$.} 
Taking as diffusion tensor the identity matrix $D(x)=\Sigma(x)=I_d\in\mathbb{R}^{d\times d}$, the modified systems \eqref{eq:SDE1},\eqref{eq:SDE0} turn out to be equivalent to the additive noise case \eqref{eq:SDEadditive}.

The determination of the diffusion matrix $D(x)$ to achieve certain aims is a challenging question in and of itself which has attracted considerable attention in the recent literature, in particular in the context of high-dimensional problems.
There are many natural choices for $D$,  from both theoretical and practical points of view.  One aim of such a position-dependent diffusion tensor is to tackle the issues of stiff and multi-modal target distributions where regions of high probability are separated by low probability regions  generating metastable trajectories with very low convergence rate $\lambda$ in \eqref{eq:convexp}.
In  recent works \cite{Cui_Tong_Zahm_2024,Lelievre_Pavliotis_Robin_Santet_Stoltz_2024,Lelievre_Santet_Stoltz_2024,Lelievre_Santet_Stoltz_2024B}, optimal diffusion matrices $D$ are proposed with different constraints that maximize the spectral gap of the generator to improve the convergence rate $\lambda$ in \eqref{eq:convexp}. 
As considered in \cite{Girolami_Calderhead_2011, Graham_Thiery_Beskos_2022}, a natural choice for $D(x)$ in the case of a strongly convex potential $V$ is the inverse of the Hessian matrix $\nabla^2 V(x)$ of the potential energy, corresponding to $\Sigma(x)=(\nabla^2 V(x))^{-1/2}$ in \eqref{eq:SDE0}.
Another natural choice is the diffusion tensor 
\begin{equation}\label{eq:Disotropic}
\Sigma(x)=e^{2\sigma^{-2}V(x)}I_d
\end{equation}
proportional to the identity matrix, corresponding to an isotropic diffusion, which is observed to be close to optimal in the one-dimensional case for various constraints on the diffusion matrix \cite{Lelievre_Santet_Stoltz_2024} and applies efficiently in high-dimensions in \cite{Roberts_Stramer_2002} and \cite[Section 5]{Lelievre_Pavliotis_Robin_Santet_Stoltz_2024}. 
The isotropic form \eqref{eq:Disotropic} is addressed in \cite{Phillips_2024}, where a time transformation of method \eqref{eq:const_D_nonMarc} alternative to the Lamperti transform is introduced and also extends to multivariate diffusion tensors.
The modified dynamics \eqref{eq:SDE0} is also useful in the context of ergodic SPDEs \cite{Debussche_2025} to improve as a preconditioner the regularity of the solution process and hence increase the converge rate of numerical integrators.
Let us also mention that alternative modifications, preserving the invariant measure \eqref{eq:defrho} and different to the form \eqref{eq:SDE1} of Brownian dynamics \eqref{eq:SDE0}, can be considered with oscillatory perturbations that improve the convergence rate to equilibrium, thanks to an enlarged spectral gap in the SDE generator.  Such methods with nonreversible dynamics are proposed in \cite{Lelievre_Nier_Pavliotis_2013} with improved convergence rate
and reduced variance at infinity \cite{Duncan_Lelievre_Pavliotis_2016,Duncan_Nusken_Pavliotis_2017}.
From this perspective, an alternative Stratonovich perturbation is proposed in \cite{Abdulle_Pavliotis_Vilmart_2009} in order to retain the reversibility of the dynamics. However, the stiffness of such highly oscillatory perturbations is tricky to address numerically, as considered recently in \cite{Lu_Spiliopoulos_2018,Zhang_Marzouk_Spiliopoulos_2022}, and such perturbations are not taken up in this paper.

Regardless of the model, there remains the important challenge of accurately discretizing the equations. In this paper, we extend the method \eqref{eq:const_D_nonMarc} to the case of variable diffusion matrices $D(x)$ and introduce a new integrator
for \eqref{eq:SDE1}, equivalently \eqref{eq:SDE0}, that has $O(h^2)$ accuracy with respect to the invariant measure \eqref{eq:deforderp} in arbitrary dimension $d$ while requiring only one evaluation of the modified force  $F=-(\Sigma^T\Sigma)\nabla V+\frac{\sigma^2}2\ddiv(\Sigma^T\Sigma)$ per timestep (and hence one evaluation of the potential gradient $\nabla V$  per timestep).
The following corrected method can be considered for \eqref{eq:SDE1}  as suggested in \cite{Hummer_2018},
\begin{equation}\label{eq:Hummer} 
    {X}_{n+1} = {X}_n + h F({X}_n) +a h \frac{\sigma^2}2 \ddiv(\Sigma^T\Sigma)({X}_n) + \sqrt{h} \sigma  \Sigma(X_n)\frac{R_n + R_{n+1}}{2},
\end{equation}
and $X_n$ which yields order $p=2$ for a constant $\Sigma$ in arbitrary dimension and the value $a=1/4$ yields a consistent method in the one-dimensional case $d=1$, but \eqref{eq:Hummer} is unfortunately not consistent in arbitrary dimension $d$ for a general diffusion tensor. It turns out that deriving such a generalization in arbitrary dimension is not straightforward due to the large number of order conditions that naturally arise in this position-dependent diffusion setting (93 conditions are listed in Remark \ref{rem:93conditions}), as we shall see in the analysis based on the algebraic framework of exotic aromatic trees and Butcher series as introduced in \cite{Laurent_2019} and studied algebraically in \cite{Bronasco22}. 

The main contributions of this paper are as follows:
\begin{itemize}
\item we generalize the method \eqref{eq:const_D_nonMarc} for sampling the invariant measure \eqref{eq:defrho} of Brownian dynamics
\eqref{eq:SDEadditive}
to the case of a position dependent diffusion matrix $D(x)$ in \eqref{eq:SDE1}. In particular, the new method is equivalent to \eqref{eq:const_D_nonMarc} if $D(x)=\Sigma(x)=I_d$ is the identity matrix and equivalent to \eqref{eq:Hummer} if $D(x),\Sigma(x)$ are constant matrices independent of~$x$ (note that in such cases, $\ddiv(D)=\ddiv(\Sigma\Sigma^T)=0$).
\item we prove it has convergence order $p=2$ for sampling the invariant measure \eqref{eq:defrho} in the setting of smooth and globally Lipschitz vector fields, based on the algebraic framework of exotic aromatic Butcher-series, which also gives insight on the method construction.
\item we analyse the mean-square stability properties of the new scheme in the context of stiff problems and discuss possible modifications of the method.
\end{itemize}
This paper is organized as follows. In Section \ref{sec:new_method} we introduce the new second order integrator for sampling the invariant measure in the case of variable diffusion. Section \ref{sec:conv} is dedicated to the convergence analysis while Section \ref{sec:stab} considers  the stability analysis of the new method. Finally, we present numerical experiments in Section \ref{sec:num} that confirm the order of accuracy of the method and show its efficiency particularly with a high dimensional example.

\section{New post-processed method for variable diffusion}
\label{sec:new_method}

We next introduce a generalization of method \eqref{eq:const_scheme} where $\overline X_n$ has order $2$ with respect to the invariant measure for the case of a position-dependent matrix $\Sigma$ in \eqref{eq:SDE0}. The new method is defined as:
\begin{align}
  X_{n+1} &= X_n + hF(\overline{X}_n) + \hat\Phi^\Sigma_h (X_n + \frac{1}{4} h F(\overline{X}_{n-1})), \nonumber \\
    \overline{X}_n &= X_n + \frac{1}{2} \sqrt{h} \sigma \Sigma(X_n) R_n, \quad \text{with } \overline{X}_{-1} = X_0, \label{eq:new_scheme}
\end{align}
where $F=-(\Sigma^T\Sigma)\nabla V+\frac2{\sigma^2}\ddiv(\Sigma^T\Sigma)$ 
and $\Phi^\Sigma_h(X_n) = X_n + \hat\Phi^\Sigma_h (X_n)$ is a one-step integrator of weak order $2$ applied to the SDE problem with and null drift function and the It\^o noise only,
\begin{equation}\label{eq:noiseonly}
dX = \sigma \Sigma(X) dW, 
\end{equation}
where $\Phi^\Sigma (X_0) = X_0 + \sqrt{h} \sigma \Sigma(X_0) R_n + O(h)$ and $R_n \sim \mathcal{N}(0,I_d)$ are independent Gaussian increments. In our work, we refer to this method as the Second-Order Post-processed method for Variable Diffusion (PVD-2). PVD-2 (\ref{eq:new_scheme}) has one evaluation of $F$ per timestep and the number of evaluations of $\Sigma$ depends on the choice of $\Phi^\Sigma_h$.
We remark that it uses a similar post-processor compared to
\eqref{eq:const_scheme},
\begin{equation}\label{eq:postprocessor}
    \overline{X}_n = \Psi_h (X_n) = X_n + \frac{1}{2} \sqrt{h} \sigma \Sigma(X_n) R_n,
\end{equation}
and that it becomes equivalent to \eqref{eq:const_scheme} for the additive noise case $\Sigma(x)=I$.
There are different natural choices for the noise integrator $\Phi_h^\Sigma$ involved in the new scheme \eqref{eq:new_scheme}. Possible weak second order noise integrators for \eqref{eq:noiseonly}
include:

\noindent
\textbf{(Method MT2)}
    The tensor $\Sigma$ is evaluated $5$ times if we choose $\Phi^\Sigma_h$ to be the method from \cite[eq. (3.7)]{Abdulle_Vilmart_Zygalakis_2013}, a derivative-free variant of the so-called Milstein-Talay method \cite[27, p. 103, eq. (2.18)]{Milstein_Tretyakov_2004} written in the particular case of a null drift function:
    \begin{align*}
        X_1 &= X_0 + \frac{1}{2} \sum_{a=1}^d \bigg( \sigma \Sigma_a \big(X_0 + h \sigma \Sigma(X_0)J_a \big) - \sigma \Sigma_a \big(X_0 - h \sigma \Sigma(X_0)J_a \big) \bigg) \\
        &\quad + \frac{\sigma\sqrt{h}}{2} \bigg( \sigma \Sigma \big( X_0 + \sqrt{\frac{h}{2}} \sigma \Sigma(X_0) \chi \big) + \sigma \Sigma \big( X_0 - \sqrt{\frac{h}{2}} \sigma \Sigma(X_0) \chi \big) \bigg) R_n,
    \end{align*}
    where $J_a = (J_{a,b})_{b=1}^d, \chi = (\chi_b)_{b=1}^d,$ and for $a,b = 1, \dots, d$ we have 
    \[ \P(\chi_b = \pm 1) = \frac{1}{2}, \]
    \[ J_{a,b} = \begin{cases} 
        (R_{n,b} R_{n,b} - 1) / 2, & \text{if } a = b, \\ 
        (R_{n,a} R_{n,b} - \chi_a) / 2, & \text{if } a > b, \\ 
        (R_{n,a} R_{n,b} + \chi_b) / 2, & \text{if } a < b. 
    \end{cases}  \]

\noindent
\textbf{(Method W2Ito1)}
    The tensor $\Sigma$ is evaluated $3$ times if we take $\Phi^\Sigma_h$ to be the method introduced in \cite[table 2]{Tang2017} with a null drift function:
    \begin{align*}
        X_1 &= X_0 + \sqrt{h} \sum_{a=1}^d \big( - \sigma \Sigma_a(X_0) + \sigma \Sigma_a(K^{(a)}_1) + \sigma \Sigma_a(K^{(a)}_2) \big) R_{n,a} \\
        &\quad\quad + 2 \sqrt{h} \sum_{a=1}^d \big(\sigma \Sigma_a(X_0) - \sigma \Sigma_a (K^{(a)}_2) \big) \hat{J}_{a,a}, \\
        K^{(a)}_1 &= X_0 + \frac{\sqrt{h}}{2} \sigma \Sigma_a(X_0) \hat{\chi}_1 + \sqrt{h} \sum_{\substack{b=1\\b \neq a}}^d \sigma \Sigma_b(X_0) \hat{J}_{a,b}, \\
        K^{(a)}_2 &= X_0 - \frac{\sqrt{h}}{2} \sigma \Sigma_a(X_0)\hat\chi_1.
    \end{align*} 
    with $\P(\hat{\chi}_i = \pm 1) = \frac{1}{2}$ for $i = 1,2$ and 
    \[ \hat{J}_{a,b} = \begin{cases} 
        \hat{\chi}_1 (R_{n,a}^2 - 1)/2, & \text{if } a = b, \\
        R_{n,b}(1 + \hat{\chi}_2)/2, & \text{if } a > b, \\ 
        R_{n,b}(1 - \hat{\chi}_2)/2, & \text{if } a < b. 
    \end{cases} \]
We emphasize that the above  noise integration methods MT2 and W2Ito1 are  Runge-Kutta type methods with noise increments in the internal stages, an idea first introduced in 
\cite{Roessler_2009} to obtain a number of diffusion tensor evaluations that is independent of the dimension of the noise. We denote versions of the PVD-2 method that use these noise integrators as PVD-2[MT2] and PVD-2[W2Ito1], respectively.

\section{Convergence analysis}
\label{sec:conv}

A numerical method $X_{n+1} = \Phi_h (X_n)$  is of weak order $q$ if for all $t_n=nh\leq T$ where $T>0$ is a fixed final time,
\begin{equation} \label{eq:defweakq}
|\E(\phi(X(t_n)) - \E(\phi(X_n))| \leq C h^q,
\end{equation}
It is said to be ergodic if there exists a unique invariant measure $\pi^h$ satisfying
\[ \lim_{N\to\infty} \frac{1}{N+1} \sum^N_{n=0} \phi(X_n) = \int_{\R^d} \phi(x) d\pi^h(x), \quad \text{almost surely}, \]
and is of order $p$ with respect to the invariant measure if
\begin{equation} \label{eq:deforderp}
\bigg| \int_{\R^d} \phi(x) d\pi^h(x) - \int_{\R^d} \phi(x) d\pi(x) \bigg| \leq Ch^p, 
\end{equation}
where $C$ is independent of $h$ sufficiently small. 

Theorem \ref{thm:main_thm} stands as the main result of the paper.  We show that although the weak order of accuracy of the new method \eqref{eq:new_scheme} applied to \eqref{eq:SDE1} for fixed final time is only $q=1$ in \eqref{eq:defweakq}, the post-processor $\overline X_n$ yields order $p=2$ in \eqref{eq:deforderp}  for sampling the invariant measure. We will introduce the necessary tools and present the proof of Theorem \ref{thm:main_thm} in Section \ref{sec:proof}.

\begin{theorem}\label{thm:main_thm}
    Assume that $V,\Sigma$ are of class $C^\infty$ with all partial derivatives having polynomial growth, and assume that $F,\Sigma$ are globally Lipchitz. 
    Assuming that it is ergodic, PVD-2 given by \eqref{eq:new_scheme} and applied to \eqref{eq:SDE0} has order two with respect to the invariant measure \eqref{eq:defrho}, precisely, 
    \begin{align*}
\bigg| 
\underset{N\rightarrow +\infty}{\lim\,\mathrm{a.s.}}
\frac{1}{N+1} \sum^N_{n=0} \phi(\overline X_n) - \int_{\R^d} \phi(x) d\pi(x) \bigg| &\leq Ch^2, \\
\big|\E(\phi(\overline X_n)) - \int_{\R^d} \phi(x) d\pi(x) \big| &\leq C(h^2+e^{-c t_n}).
\end{align*}
 for all smooth test function $\phi$ and all initial condition $X_0=x$ where the constant $c, C,\lambda, K$ are independent of $n$, and $h$ assumed small enough.
\end{theorem}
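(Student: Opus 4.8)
The plan is to follow the Talay--Tubaro backward error analysis \cite{Talay_Tubaro_1990}, adapted to post-processed ergodic integrators as in \cite{Vilmart_2015,Abdulle_Vilmart_Zygalakis_14}, and to organise the resulting expansion in the algebra of exotic aromatic trees \cite{Laurent_2019,Bronasco22}. First I would compute the weak Taylor expansion of a single step of \eqref{eq:new_scheme} together with the post-processor \eqref{eq:postprocessor}. Taking the conditional expectation over the Gaussian increments (whose odd moments vanish and whose even moments are explicit) kills the half-integer powers of $h$ and yields
\begin{align*}
\E\!\left[\phi(X_{n+1})\mid X_n=x\right] &= \phi(x) + h\,\mathcal{L}\phi(x) + h^2\mathcal{A}_2\phi(x) + O(h^3),\\
\E\!\left[\phi(\overline X_n)\mid X_n=x\right] &= \phi(x) + h\,\mathcal{B}\phi(x) + O(h^2),
\end{align*}
where $\mathcal{L}$ is the generator of \eqref{eq:SDE0} and $\mathcal{A}_2,\mathcal{B}$ are explicit differential operators whose coefficients depend on $F$, $\Sigma$, the moments of $R_n$, and the chosen weak-second-order noise integrator $\Phi^\Sigma_h$. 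Matching the $h\mathcal{L}$ term is exactly the consistency requirement, and it is here that the weak order two of $\Phi^\Sigma_h$ for the noise-only equation \eqref{eq:noiseonly} enters: it guarantees that the diffusion part of $\mathcal{A}_2$ coincides with that of the exact flow.

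Second, I would derive the order-two condition for the sampled (post-processed) invariant measure. Writing the modified generator of the scheme as $\widetilde{\mathcal{L}} = \mathcal{L} + h\,\mathcal{L}_1 + O(h^2)$ and the numerical invariant density as $\rho^h = \rho_\infty + h\,\rho_1 + O(h^2)$, the stationarity relation $\widetilde{\mathcal{L}}^{*}\rho^h = 0$ gives $\mathcal{L}^{*}\rho_1 = -\mathcal{L}_1^{*}\rho_\infty$ at leading order; since $\rho_\infty$ spans the kernel of $\mathcal{L}^{*}$ and the reversibility of \eqref{eq:SDE0} makes $\mathcal{L}$ self-adjoint in $L^2(\rho_\infty)$, the post-processed average of $\phi$ matches $\int\phi\,d\pi$ to order $h^2$ precisely when the $O(h)$ coefficient of $\int(\phi + h\mathcal{B}\phi)\,\rho^h\,dx$ vanishes, i.e.\ when the functional $\phi\mapsto\int_{\R^d}(\mathcal{B}\phi)\,\rho_\infty\,dx + \int_{\R^d}\phi\,\rho_1\,dx$, built from $\mathcal{B}$, $\mathcal{L}_1$ and $\mathcal{L}$, annihilates every test function. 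The crucial simplification is that this functional is an exotic aromatic B-series, and that the pairing $\psi\mapsto\int_{\R^d}\psi\,\rho_\infty\,dx$ obeys integration-by-parts relations (through substitution of the score $\nabla\log\rho_\infty$, a multiple of $\nabla V$) which make large families of aromatic trees vanish; this collapses the formally infinite set of tree-indexed equations to the finitely many independent scalar order conditions recorded in Remark \ref{rem:93conditions}.

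Third, the substantive step is to verify these conditions for PVD-2. Here I would expand each elementary differential appearing in $\mathcal{A}_2$ and $\mathcal{B}$ into its exotic aromatic tree components, read off the scalar coefficient attached to each tree (these are polynomials in the method parameters $\tfrac12$, $\tfrac14$ and in the Gaussian moments), and check the conditions one family at a time. The design of the scheme is tailored to this: the factor $\tfrac12$ in the post-processor \eqref{eq:postprocessor} reproduces the constant-diffusion Leimkuhler--Matthews cancellations, while the explicit drift correction through $\tfrac14\,hF(\overline X_{n-1})$ and the lag in $\overline X_{n-1}$ are precisely what cancel the remaining position-dependent aromatic contributions that \eqref{eq:Hummer} fails to remove in dimension $d>1$. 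I expect this verification to be the main obstacle: it is a large but finite algebraic computation, and its tractability rests entirely on the exotic aromatic formalism, which both reduces the number of conditions and exposes which tree coefficients the free parameters are able to control.

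Finally, I would upgrade the formal expansion to the rigorous estimates stated in the theorem. The global Lipschitz and polynomial-growth assumptions on $F,\Sigma$ supply uniform-in-$h$ bounded moments of $X_n$ and the regularity of the solution of the associated Poisson equation $\mathcal{L}\psi = \phi - \int\phi\,d\pi$; together with the assumed ergodicity of the scheme and the Mattingly--Stuart--Higham machinery \cite{Mattingly_Stuart_Higham_2002}, this justifies truncating the Talay--Tubaro series and controlling the remainder uniformly in $n$. Combining the verified order-two conditions with these bounds then yields both the time-averaged estimate and the exponentially relaxing estimate $|\E(\phi(\overline X_n)) - \int_{\R^d}\phi\,d\pi| \leq C(h^2 + e^{-c t_n})$, completing the argument.
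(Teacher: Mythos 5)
Your overall strategy (weak Taylor expansion, post-processor order conditions in the sense of \cite{Vilmart_2015,Abdulle_Vilmart_Zygalakis_14}, exotic aromatic trees with integration by parts against $\rho_\infty$) is the same as the paper's, but there are two concrete problems with the plan as written. First, and most importantly, your opening display $\E[\phi(X_{n+1})\mid X_n=x]=\phi(x)+h\LL\phi(x)+h^2\AA_2\phi(x)+O(h^3)$ is not available for \eqref{eq:new_scheme}: the update for $X_{n+1}$ depends on $\overline X_{n-1}$ as well as on $X_n$, so the scheme is not a one-step Markovian map and Theorems \ref{thm:ord_cond_inv_meas} and \ref{thm:postprocessor} do not apply to it directly. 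The paper resolves this by first proving order two for the Markovian modification \eqref{eq:postprocessor2} in which $F(\overline X_{n-1})$ is replaced by $F(X_n)$, and then showing (in the proof of Theorem \ref{thm:main_thm}) that $\overline X_{n-1}=X_n-\frac12\sqrt h\,\sigma\Sigma(X_n)R_{n-1}+O(h)$, so the two schemes differ inside $\hat\Phi^\Sigma_h$ by a term whose expectation is $O(h^3)$ and hence have identical $\AA_2$. Your proposal needs this reduction (or an equivalent partitioned/augmented-state argument); without it the expansion you start from is not well defined.

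Second, the verification step you describe --- expanding $\AA_2$ into individual tree components and ``checking the conditions one family at a time,'' i.e.\ the system recorded in Remark \ref{rem:93conditions} --- is exactly the direct approach the paper states is intractable by hand. The paper instead proves a single aggregated identity: Lemma \ref{lemma:A2_cond} applies IBP (Theorem \ref{thm:IBP}) to the target $\langle \frac{\LL^2}{2}-[\LL,\AAb_1]\rangle$ so that it is rewritten as $\langle\dF(\cdot)\rangle$ of one specific linear combination of exotic forests, and Proposition 3.8 then shows that $\AA_2$ of the scheme equals precisely that combination, with the entire noise-only block $\AA_2^\Sigma$ supplied wholesale by the assumed weak order two of $\Phi^\Sigma_h$ for \eqref{eq:noiseonly} (evaluated at the shifted point $X_n+\frac14 hF(X_n)$, which produces the forest carrying the drift--noise coupling). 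You should restructure the verification around this single identity rather than the 93 scalar conditions; otherwise the ``large but finite algebraic computation'' you defer to is the very obstruction the method's construction is designed to bypass. The remaining ingredients of your last paragraph (bounded moments, Poisson equation, ergodicity giving the exponential term) are consistent with the framework the paper invokes.
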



\subsection{Preliminaries}

Recall that the weak Taylor expansion of the solution of \eqref{eq:SDE0} has the form
\[ \E[\phi(X(h)) | X(0) = X_0] = \phi(X_0) + h(\LL\phi)(X_0) + h^2\frac{\LL^2 \phi}{2!} (X_0) + \cdots + h^k\frac{\LL^k\phi}{k!}(X_0) + \cdots, \]
where the generator of the SDE is given by $\LL \phi := F \cdot \nabla \phi + \frac{\sigma^2}{2} \sum_{a=1}^d \phi^{\prime \prime} (\Sigma_a, \Sigma_a)$. We consider  ergodic numerical methods of the form $X_{n+1} = \Phi_h (X_n)$ with the following weak Taylor expansion,
\[ \E[\phi(\Phi_h(X_0))] = \phi(X_0) + h(\AA_1\phi)(X_0) + h^2(\AA_2\phi)(X_0) + \cdots + h^k(\AA_k \phi)(X_0) + \cdots, \]
where $\AA_k$ are differential operators. 

Let $C^\infty_P(\R^d)$ denote the space of $C^\infty$ functions with all partial derivatives having polynomial growth of the form 
\[ \left|\frac{\partial^n \phi(x)}{\partial x_{i_1} \cdots \partial x_{i_n}}\right| \leq C_n(1 + |x|^{s_n}) \quad \text{for any } n \in \mathbb{N} \text{ and } 1 \leq i_k \leq d \text{ with } k = 1, \dots, n, \]
with constants $C_n$ and $s_n$ independent of $x$. Theorem \ref{thm:ord_cond_inv_meas} presents order conditions with respect to the invariant measure using the differential operators $\AA_k$ assuming $\AA_1 = \LL$.
\begin{theorem}\label{thm:ord_cond_inv_meas}\cite{Abdulle_Vilmart_Zygalakis_14}
    Assume the numerical method $\Phi_h$ has bounded moments of any order along time and admits a weak Taylor expansion with differential operators $\AA_j$. If, for all $\phi \in C_P^\infty(\R^d)$, we have
    \[ \int_{\R^d} (\AA_j\phi)(x) \rho_\infty(x) dx = 0, \quad \text{for } j = 2, \dots, p, \]
    then, the numerical method $\Phi_h$ has order $p$ with respect to the invariant measure.
\end{theorem}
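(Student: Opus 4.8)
The plan is to follow the Talay--Tubaro backward error analysis \cite{Talay_Tubaro_1990}, exploiting the Poisson equation associated with the generator $\LL$ and bootstrapping on the order in $h$. Throughout write $P_h\phi(x)=\E[\phi(\Phi_h(x))]$ for the one-step transition operator, so that the weak expansion reads $P_h\phi=\phi+h\LL\phi+\sum_{j\ge 2}h^j\AA_j\phi$ with $\AA_1=\LL$, and denote by $\pi^h$ the invariant measure of the scheme, which satisfies $\int P_h\phi\,d\pi^h=\int\phi\,d\pi^h$. For a fixed test function $g\in C_P^\infty(\R^d)$ set $I^h[g]:=\int_{\R^d} g\,d\pi^h-\int_{\R^d} g\,\rho_\infty\,dx$; the goal is $|I^h[\phi]|\le Ch^p$.

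First I would record the analytic input on which everything rests: under ergodicity and the stated growth and Lipschitz hypotheses, the centred Poisson equation $\LL\psi=g-\int g\,d\pi$ has, for every $g\in C_P^\infty(\R^d)$, a solution $\psi\in C_P^\infty(\R^d)$ whose polynomial-growth seminorms are controlled by finitely many such seminorms of $g$. This is the standard consequence of the exponential convergence to equilibrium \eqref{eq:convexp} together with Lyapunov-type estimates, and it is exactly the place where the hypotheses of the theorem are consumed. I would use the bounded-moments assumption in two further ways: to guarantee that the weak expansion of $P_h\psi$, truncated at order $p$, leaves a remainder $r$ of size $O(h^{p+1})$ with polynomial-growth control that survives integration against $\pi^h$; and to obtain the base estimate $|I^h[g]|\le C\,N(g)$, valid since both $\pi^h$ and $\pi$ are probability measures and $g$ has polynomial growth.

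The core computation is the following identity. Solving the Poisson equation for $g$, inserting $\LL\psi=h^{-1}(P_h\psi-\psi)-\sum_{j=2}^{p}h^{j-1}\AA_j\psi-h^p r$, then integrating against $\pi^h$ and using invariance to kill the term $h^{-1}\int(P_h\psi-\psi)\,d\pi^h=0$, yields
\[
I^h[g]=\int_{\R^d}\LL\psi\,d\pi^h=-\sum_{j=2}^{p}h^{j-1}\int_{\R^d}\AA_j\psi\,d\pi^h-h^p\!\int_{\R^d} r\,d\pi^h.
\]
Now the hypothesis enters decisively: for $2\le j\le p$ the splitting $\int\AA_j\psi\,d\pi^h=\int\AA_j\psi\,\rho_\infty\,dx+I^h[\AA_j\psi]$ has vanishing first summand, leaving $I^h[g]=-\sum_{j=2}^{p}h^{j-1}I^h[\AA_j\psi]+O(h^p)$. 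Since each $\AA_j$ is a differential operator with polynomially growing coefficients it maps $C_P^\infty$ into itself, so $\AA_j\psi\in C_P^\infty$, and the formula re-expresses $I^h[g]$ through the same functional evaluated at other admissible test functions, but carrying at least one extra power of $h$.

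This sets up an induction on $k=0,1,\dots,p$ for the statement $|I^h[g]|\le C_k N_k(g)\,h^k$ for all $g\in C_P^\infty$, where $N_k$ is a seminorm counting finitely many derivatives and the growth exponent. The base case $k=0$ is the bounded-moments estimate of the second paragraph. For the inductive step, applying $|I^h[\AA_j\psi]|\le C_k N_k(\AA_j\psi)h^k$ to the identity produces terms of size $h^{j-1+k}$, the smallest being $j=2$ of order $h^{k+1}$, while the remainder is $O(h^p)$ with $p\ge k+1$; combined with the a-priori bound $N_k(\AA_j\psi)\le C\,N_{k+1}(g)$ from the Poisson solve, this gives $|I^h[g]|\le C_{k+1}N_{k+1}(g)h^{k+1}$. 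Iterating up to $k=p$ and specializing $g=\phi$ proves the claim. The main obstacle is not this algebraic bootstrap but the analytic groundwork of the second paragraph: establishing solvability and the $C_P^\infty$ a-priori estimates for the Poisson equation and the uniform-in-$h$ remainder control, which is where ergodicity, the exponential relaxation \eqref{eq:convexp}, and the bounded-moments hypothesis must be combined, as in \cite{Abdulle_Vilmart_Zygalakis_14}; all remaining steps are bookkeeping built upon these estimates.
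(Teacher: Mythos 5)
Your argument is correct, but it is worth noting that the paper itself does not prove this statement: it is quoted from the reference of Abdulle, Vilmart and Zygalakis, and the proof given there follows a genuinely different route from yours. The cited proof is a Talay--Tubaro-style global error expansion: one introduces the backward Kolmogorov solution $u(x,t)$ with $\partial_t u = \LL u$, $u(\cdot,0)=\phi$, uses the exponential decay (with all derivatives, in $C_P^\infty$ seminorms) of $u(\cdot,t)-\int\phi\,d\pi$ furnished by ergodicity as in \eqref{eq:convexp}, writes $\E[\phi(X_N)]-\int\phi\,d\pi$ as a telescoping sum of one-step defects $\E[(P_h-e^{h\LL})u(\cdot,t_N-t_{k+1})(X_k)]$, and runs an induction on the order $p$: the hypothesis $\langle \AA_j\phi\rangle=0$ (together with $\langle \LL^j\phi/j!\rangle=0$, automatic since $\LL^*\rho_\infty=0$) makes each local-defect coefficient $\pi$-centred, and the induction hypothesis applied to these centred, exponentially decaying test functions upgrades the naive $O(h)$ bound order by order. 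That route yields the stronger nonstationary estimate \eqref{eq:invpexp}, i.e.\ $|\E(\phi(X_n))-\int\phi\,d\pi|\leq C_1e^{-ct_n}+C_2h^p$, from which the invariant-measure bound follows. Your proof instead exploits stationarity of $\pi^h$ directly: the identity $\int(P_h\psi-\psi)\,d\pi^h=0$ combined with the Poisson equation $\LL\psi=g-\int g\,d\pi$ and the bootstrap on $I^h[\AA_j\psi]$ gives exactly the stationary bound \eqref{eq:deforderp} claimed in the statement, in a shorter and purely algebraic fashion once the analytic groundwork is in place. What your approach buys is brevity and a clean separation between the algebra (the bootstrap) and the analysis (Poisson regularity with $C_P^\infty$ seminorm control, uniform-in-$h$ remainder bounds); what it costs is that you must assume a priori the existence of the numerical invariant measure $\pi^h$ with moments bounded uniformly in $h$ --- which in this paper's framing is legitimate, since numerical ergodicity is assumed in the surrounding definitions and in Theorem \ref{thm:main_thm}, and your bounded-moments hypothesis delivers the uniform moment control --- and you obtain no information on the exponential transient in \eqref{eq:invpexp}. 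You correctly identify the Poisson solvability estimates (via the representation $\psi(x)=-\int_0^\infty\big(\E[g(X(t))\,|\,X_0=x]-\int g\,d\pi\big)\,dt$, convergent by \eqref{eq:convexp}) as the load-bearing analytic input, and your seminorm bookkeeping $N_k(\AA_j\psi)\leq C\,N_{k+1}(g)$ closes the induction soundly, so the argument stands as a valid alternative proof of the stated conclusion.
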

Let us use the following notation for simplicity,
\[ \langle \AA\phi \rangle := \int_{\R^d} (\AA\phi) \rho_\infty dx, \quad \text{for } \phi \in C_P^\infty(\R^d), \]
where $\AA$ is a differential operator. We omit writing $\phi$ when the identity is required to be true for all $\phi \in C^\infty_P(\R^d)$.

We use the following extension of Theorem \ref{thm:ord_cond_inv_meas} combines an integrator with a post-processor to increase the order with respect to the invariant measure.

\begin{theorem}\cite{Vilmart_2015}\label{thm:postprocessor}
    Assume the hypotheses of Theorem \ref{thm:ord_cond_inv_meas} and consider a post-processor $\overline{X}_n = \Psi_h(X_n)$ that admits the following weak Taylor expansion for all $C_P^\infty(\R^d)$:
    \[ \E[\phi(\Psi_h(X_0))] = \phi(X_0) + \sum_{i=1}^{p-1} \alpha_i h^i \LL^i\phi (X_0) + h^p \AAb_p \phi(X_0) + \cdots, \]
    for some constants $\alpha_i$ and differential operator $\AAb_p$. Assume further that
    \[ \langle \AA_{p+1} + [\LL, \AAb_p] \rangle = 0, \]
    where $[\LL, \AAb_p] = \LL \AAb_p - \AAb_p \LL$ is the Lie bracket. Then, $\overline{X}_n$ yields an approximation of order $p+1$ for the invariant measure.
\end{theorem}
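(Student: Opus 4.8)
The plan is to rewrite the ergodic average of $\phi$ over the post-processed samples as an integral of $\E[\phi(\Psi_h(\cdot))]$ against the \emph{modified} invariant measure $\pi^h$ of the underlying integrator $\Phi_h$, and then to expand both the post-processor and the density of $\pi^h$ in powers of $h$, matching coefficients up to order $h^p$. By ergodicity of $\Phi_h$ and the definition $\overline X_n = \Psi_h(X_n)$,
\[ \underset{N\to+\infty}{\lim\,\mathrm{a.s.}}\ \frac{1}{N+1}\sum_{n=0}^N \phi(\overline X_n) = \int_{\R^d} \E[\phi(\Psi_h(x))]\, d\pi^h(x) = \int_{\R^d} \E[\phi(\Psi_h(x))]\, \rho_\infty^h(x)\, dx, \]
where $\rho_\infty^h$ is the density of $\pi^h$. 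First I would record the two structural facts used throughout: $\LL^*\rho_\infty = 0$, so that $\langle \LL\psi\rangle = 0$ for every $\psi\in C_P^\infty(\R^d)$ by integration by parts (here $\AA^*$ denotes the formal $L^2$-adjoint); and that every operator arising in a weak Taylor expansion annihilates constants, i.e. $\AA_k 1 = 0$ and $\AAb_p 1 = 0$ (take $\phi\equiv 1$ in the respective expansions, using $\LL 1 = 0$).

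Next I would expand $\rho_\infty^h = \rho_\infty + \sum_{j\ge 1} h^j \rho_j$ and exploit the invariance identity $\int \E[\phi(\Phi_h(x))]\rho_\infty^h\,dx = \int \phi\,\rho_\infty^h\,dx$. Inserting $\E[\phi(\Phi_h)] = \phi + \sum_{k\ge 1} h^k \AA_k\phi$ with $\AA_1 = \LL$ and matching powers of $h$ yields the hierarchy $\LL^*\rho_{j-1} + \sum_{k=2}^{j} \AA_k^*\rho_{j-k} = 0$. The hypotheses of Theorem~\ref{thm:ord_cond_inv_meas}, and in particular the order conditions $\AA_j^*\rho_\infty = 0$ for $j=2,\dots,p$, fed into this hierarchy together with $\Ker\LL^* = \Span\{\rho_\infty\}$ and the normalization $\int\rho_j\,dx = 0$, force $\rho_1 = \cdots = \rho_{p-1} = 0$ and leave the single Poisson equation $\LL^*\rho_p = -\AA_{p+1}^*\rho_\infty$ for the leading correction. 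Its solvability is guaranteed since the right-hand side has zero mean, $\int \AA_{p+1}^*\rho_\infty\,dx = \langle \AA_{p+1}1\rangle = 0$.

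The core computation is then to collect powers of $h$ in $\int \E[\phi(\Psi_h)]\rho_\infty^h\,dx$ using $\rho_\infty^h = \rho_\infty + h^p\rho_p + O(h^{p+1})$ and the given expansion of $\Psi_h$. For $1\le i\le p-1$ the contribution $\alpha_i h^i\int \LL^i\phi\,\rho_\infty\,dx = \alpha_i h^i\int \phi\,(\LL^*)^i\rho_\infty\,dx$ vanishes, while the zeroth-order term reproduces $\int \phi\,\rho_\infty\,dx$; hence the first possibly nonzero correction is the $h^p$ coefficient $\langle \AAb_p\phi\rangle + \int \phi\,\rho_p\,dx$. It is exactly here that the hypothesis enters. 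Since $\langle \LL\psi\rangle = 0$, the assumed condition $\langle \AA_{p+1} + [\LL,\AAb_p]\rangle = 0$ collapses to $\langle \AA_{p+1} - \AAb_p\LL\rangle = 0$, i.e. $\AA_{p+1}^*\rho_\infty = \LL^*\AAb_p^*\rho_\infty$ in adjoint form. Comparing this with the Poisson equation gives $\LL^*(\rho_p + \AAb_p^*\rho_\infty) = 0$, and since both $\rho_p$ and $-\AAb_p^*\rho_\infty$ have zero mean (the latter because $\AAb_p 1 = 0$), uniqueness modulo $\Span\{\rho_\infty\}$ yields $\rho_p = -\AAb_p^*\rho_\infty$. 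Therefore $\int\phi\,\rho_p\,dx = -\langle\AAb_p\phi\rangle$, the $h^p$ coefficient cancels, and only $O(h^{p+1})$ terms remain, giving order $p+1$.

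The main obstacle is not the algebra above but its rigorous justification. One must establish that $\pi^h$ genuinely admits the asymptotic expansion posited for $\rho_\infty^h$ with remainders controlled uniformly in $n$, and that the Poisson equations for $\LL^*$ are solvable with solutions of controlled polynomial growth, so that every pairing $\langle\cdot\rangle$ and integration by parts is legitimate on $C_P^\infty(\R^d)$. This is precisely where the standing assumptions are used: bounded moments of all orders for $\Phi_h$, and the $C^\infty$ regularity with polynomial-growth derivatives together with global Lipschitzness of $F,\Sigma$. I would therefore invoke the Talay--Tubaro-type backward error analysis of \cite{Abdulle_Vilmart_Zygalakis_14,Vilmart_2015} for these analytic inputs rather than reprove them, reserving the self-contained part of the argument for the operator-algebraic cancellation that the stated condition $\langle \AA_{p+1} + [\LL,\AAb_p]\rangle = 0$ makes transparent.
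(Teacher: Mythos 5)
The paper does not actually prove this theorem; it is imported from \cite{Vilmart_2015} as a black box, so there is no in-paper proof to compare against. Your reconstruction is correct and is essentially the adjoint (Fokker--Planck) formulation of the argument in the cited reference: you expand the numerical invariant density $\rho_\infty^h=\rho_\infty+\sum_j h^j\rho_j$, use the order conditions $\AA_j^*\rho_\infty=0$ ($j=2,\dots,p$) to kill $\rho_1,\dots,\rho_{p-1}$, isolate the Poisson equation $\LL^*\rho_p=-\AA_{p+1}^*\rho_\infty$, and show that the bracket condition forces $\rho_p=-\AAb_p^*\rho_\infty$, which exactly cancels the $h^p$ contribution of the post-processor. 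The proof in \cite{Vilmart_2015} (building on \cite{Abdulle_Vilmart_Zygalakis_14}) works on the test-function side instead: it uses the Poisson equation $\LL\psi=\phi-\int\phi\,d\pi$ and the leading-order error formula $-h^p\langle\AA_{p+1}\psi\rangle$ for the unprocessed chain, then observes that $\langle\AA_{p+1}\psi\rangle=\langle\AAb_p\LL\psi\rangle=\langle\AAb_p\phi\rangle$ under the stated condition, so the post-processor's $h^p\langle\AAb_p\phi\rangle$ term cancels it. The two routes are dual and yield identical cancellations; yours makes the mechanism $\rho_p=-\AAb_p^*\rho_\infty$ explicit, while the original avoids manipulating $\rho_\infty^h$ directly and only needs the scalar error expansion. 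Your algebra (the reduction of $\langle[\LL,\AAb_p]\rangle$ to $-\langle\AAb_p\LL\rangle$, the zero-mean normalizations via $\AA_k1=\AAb_p1=0$, and the uniqueness in $\Ker\LL^*=\Span\{\rho_\infty\}$) is sound. You also correctly identify that the only genuinely nontrivial inputs are analytic --- existence and uniform validity of the expansion of $\pi^h$, solvability of the Poisson equations in $C_P^\infty(\R^d)$, and the characterization of $\Ker\LL^*$ --- and these are legitimately delegated to the Talay--Tubaro-type results assumed in the hypotheses, exactly as the paper itself does by citing the result.
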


\subsection{Integration by parts}\label{sec:IBP}

Since order conditions with respect to the invariant measure are expressed in Theorems \ref{thm:ord_cond_inv_meas} and \ref{thm:postprocessor} using integrals of differential operators applied to test functions $\phi$, we use integration by parts and Lemma \ref{lemma:divD2_D2F} to manipulate the expressions. 

\begin{lemma}\label{lemma:divD2_D2F}
We have the following identities:
\begin{enumerate}
    \item $\Div (\Sigma^2) = \sum_{a=1}^d \Sigma_a \Div(\Sigma_a) + \sum_{a=1}^d \Sigma_a^\prime \Sigma_a, \label{eq:divD2}$
    \item $\Sigma^2 f = \sum_{a=1}^d \Sigma_a (\Sigma_a \cdot f)$.
\end{enumerate}
\end{lemma}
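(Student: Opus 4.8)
The plan is to verify both identities componentwise, using the fact (assumed without loss of generality in the text) that $\Sigma$ is symmetric, so that $\Sigma_{ij} = \Sigma_{ji}$ and the $a$th column $\Sigma_a$ coincides with the $a$th row. Throughout I write $(\Sigma^2)_{ij} = \sum_k \Sigma_{ik}\Sigma_{kj}$ and recall that $\Div$ of a matrix is the vector of divergences of its columns, $(\Div M)_j = \sum_i \partial_i M_{ij}$. Smoothness of $\Sigma$ guarantees that the product rule applies termwise, so the whole argument is purely algebraic.

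For the second identity I would start from the $i$th component $(\Sigma^2 f)_i = \sum_k \Sigma_{ik}\sum_j \Sigma_{kj} f_j$. The inner sum $\sum_j \Sigma_{kj} f_j$ equals $\Sigma_k \cdot f$ by symmetry (since $\Sigma_{kj} = (\Sigma_k)_j$), and the outer factor is $\Sigma_{ik} = (\Sigma_k)_i$, so $(\Sigma^2 f)_i = \sum_k (\Sigma_k)_i (\Sigma_k\cdot f)$. Reading this over $i$ as a vector identity gives exactly $\Sigma^2 f = \sum_a \Sigma_a(\Sigma_a\cdot f)$.

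For the first identity I would apply the product rule to the $j$th column, $(\Div \Sigma^2)_j = \sum_i \partial_i \sum_k \Sigma_{ik}\Sigma_{kj} = \sum_k \bigl(\sum_i \partial_i \Sigma_{ik}\bigr)\Sigma_{kj} + \sum_k \sum_i \Sigma_{ik}\,\partial_i \Sigma_{kj}$. In the first sum, $\sum_i \partial_i \Sigma_{ik} = \Div(\Sigma_k)$ and $\Sigma_{kj} = (\Sigma_k)_j$, so it reads $\sum_k (\Sigma_k)_j \Div(\Sigma_k)$, which is the $j$th component of $\sum_a \Sigma_a \Div(\Sigma_a)$. In the second sum I would use symmetry twice: $\Sigma_{ik} = (\Sigma_k)_i$ and $\partial_i \Sigma_{kj} = \partial_i (\Sigma_k)_j = (\Sigma_k')_{ji}$, whence $\sum_i \Sigma_{ik}\partial_i\Sigma_{kj} = \sum_i (\Sigma_k')_{ji}(\Sigma_k)_i = (\Sigma_k'\Sigma_k)_j$, i.e. the $j$th component of $\sum_a \Sigma_a'\Sigma_a$. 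Summing the two contributions yields the claimed formula.

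The only real obstacle is bookkeeping: keeping straight the roles of the column index $k$ (the column of $\Sigma$ being differentiated) versus the spatial index $i$ (the divergence variable), and invoking the symmetry of $\Sigma$ at precisely the right places to convert matrix entries into column components. There is no analytic difficulty, so I expect the proof to reduce to these two short index computations.
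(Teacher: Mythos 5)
Your componentwise verification is correct: both identities reduce to the index computations you give, and you invoke the symmetry of $\Sigma$ (so that $\Sigma_{kj}=(\Sigma_k)_j$) exactly where it is needed — note that without symmetry the right-hand sides would instead equal $\ddiv(\Sigma\Sigma^T)$ and $\Sigma\Sigma^T f$. The paper states this lemma without any proof, so there is no argument to compare against; your calculation is the natural one and fills that gap.
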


We apply Lemma \ref{lemma:divD2_D2F} in the following example.

\begin{example}
        Let us consider a case with $\Sigma$ assumed constant,
    \begin{align*}
        \sum_{i,j,k,a=1}^d \langle \Sigma_{ia} \Sigma_{ja} &F^k \partial_{i,j,k} \phi \rangle = \sum_{i,j,k,a=1}^d \int_{\R^d} \Sigma_{ia} \Sigma_{ja} F^k \partial_{i,j,k} \phi \rho_\infty dx \tag{A}\\
        \shortintertext{apply integration by parts}
        (A) &= - \sum_{i,j,k,a=1}^d \int_{\R^d} \Sigma_{ia} \Sigma_{ja} (\partial_i F^k) (\partial_{j,k} \phi) \rho_\infty dx \\
        &\quad - \sum_{i,j,k,a=1}^d \int_{\R^d} \Sigma_{ia} \Sigma_{ja} F^k (\partial_{j,k} \phi) (\partial_i \rho_\infty) dx \\
        \shortintertext{we use $\partial_i \rho_\infty = \frac{2}{\sigma^2} f^i \rho_\infty$ and rewrite the expression without the integral notation}
        (A) &= - \sum_{j,k,a=1}^d \langle \sum_{i=1}^d \Sigma_{ia} \Sigma_{ja} (\partial_i F^k) (\partial_{j,k} \phi) + \frac{2}{\sigma^2} (\Sigma_a \cdot f) \Sigma_{ja} F^k (\partial_{j,k} \phi) \rangle
        \shortintertext{applying Lemma \ref{lemma:divD2_D2F} and the definition of $F$ we get}
        (A) &= -\sum_{j,k,a=1}^d \langle \frac{2}{\sigma^2} F^j F^k (\partial_{j,k} \phi) + \sum_{i=1}^d \Sigma_{ia} \Sigma_{ja} (\partial_i F^k) (\partial_{j,k} \phi) \rangle.
    \end{align*}
Next, we consider the same case with $\Sigma$ being non-constant,
\begin{align*}
    \sum_{i,j,k,a=1}^d \langle \Sigma_{ia} \Sigma_{ja} &F^k (\partial_{i,j,k} \phi) \rangle = \sum_{i,j,k,a=1}^d \int_{\R^d} \Sigma_{ia} \Sigma_{ja} F^k (\partial_{i,j,k} \phi) \rho_\infty dx \tag{B} \\
    \shortintertext{apply integration by parts}
    (B) &= - \sum_{i,j,k,a=1}^d \int_{\R^d} (\partial_i \Sigma_{ia}) \Sigma_{ja} F^k (\partial_{j,k} \phi) \rho_\infty dx
     - \sum_{i,j,k,a=1}^d \int_{\R^d} \Sigma_{ia} (\partial_i \Sigma_{ja}) F^k (\partial_{j,k} \phi) \rho_\infty dx \\
    &\quad - \sum_{i,j,k,a=1}^d \int_{\R^d} \Sigma_{ia} \Sigma_{ja} (\partial_i F^k) (\partial_{j,k} \phi) \rho_\infty dx
     - \sum_{i,j,k,a=1}^d \int_{\R^d} \Sigma_{ia} \Sigma_{ja} F^k (\partial_{j,k} \phi) (\partial_i \rho_\infty) dx \\
    \shortintertext{we use $\partial_i \rho_\infty = \frac{2}{\sigma^2} F^i \rho_\infty$ and rewrite the expression without the integral notation}
    (B) &= - \sum_{j,k,a=1}^d \langle (\Sigma_a \Div(\Sigma_a) + \Sigma_a^\prime \Sigma_a)^j F^k (\partial_{j,k} \phi) \\
    &\quad \quad + \sum_{i=1}^d \Sigma_{ia} \Sigma_{ja} (\partial_i F^k) (\partial_{j,k} \phi) + \frac{2}{\sigma^2} (\Sigma_a \cdot F) \Sigma_{ja} F^k (\partial_{j,k} \phi) \rangle
    \shortintertext{applying Lemma \ref{lemma:divD2_D2F} and the definition of $F$ we get}
    (B) &= -\sum_{j,k,a=1}^d \langle \frac{2}{\sigma^2} F^j F^k (\partial_{j,k} \phi) + \sum_{i=1}^d \Sigma_{ia} \Sigma_{ja} (\partial_i F^k) (\partial_{j,k} \phi) \rangle.
\end{align*}
\end{example}

Using the tree formalism introduced in Section \ref{sec:tree_formalism}, we can write this example as 
\[ \langle \dF(\forest{1,1,b}) \rangle = - \langle \dF(2\forest{b,b} + \forest{1,b[1]}) \rangle. \]
An analogous computation can be performed to show that $\langle \dF(\forest{1,b[1]}) \rangle = - \langle \dF(2 \forest{b[b]} + \forest{b[1,1]}) \rangle$.

\subsection{Tree formalism}\label{sec:tree_formalism}

To simplify the computations involving differential operators, we extend the formalisms of grafted  and exotic forests introduced in \cite{Laurent_2019} and studied algebraically in \cite{Bronasco22}.

\begin{definition}
    A grafted tree is a rooted non-planar tree whose vertices are colored by $\bullet$ and leaves are colored by $\bullet$ and $\times$. Grafted forests are monomials of grafted trees.
\end{definition}
The vertices colored by $\times$ are called grafted. The set of grafted trees is denoted by $T_g$ and the corresponding vector space by $\Tg$. Grafted forests are defined as $\Fg := \SS(\Tg)$ with monomials forming a set $F_g$. The size of a grafted forests is computed by taking the sum of the weights of its vertices. Vertices colored by $\bullet$ have weight $1$ and vertices colored by $\times$ have weight $0.5$. All grafted trees up to size $3$ are listed below:
\[ \forest{x}, \quad \forest{b}, \quad \forest{b[x]}, \quad \forest{b[x,x]}, \quad \forest{b[b]}, \quad \forest{b[x,x,x]}, \quad \forest{b[b,x]}, \quad \forest{b[b[x]]}. \]

\begin{definition}
    An exotic forest is a grafted forest in which all grafted vertices are split into pairs.
\end{definition}

The pairing between two grafted vertices is denoted by assigning a natural number to the pair. The choice of the natural number doesn't matter as long as the natural numbers are distinct. An exotic forest is called connected if it cannot be written as a concatenation of non-trivial exotic sub-forests. For example, all connected exotic forests up to size $3$ are:
\[ \forest{b}, \quad \forest{1,1}, \quad \forest{b[b]}, \quad \forest{b[1],1}, \quad \forest{b[1,1]}. \]
The following exotic forests are identical,
\[ \forest{b[1,2,b[2]],1} = \forest{b[4,3,b[4]],3}. \]

We extend the definition of the grafted tree by allowing internal vertices to be colored by $\times$. For example, we allow the following grafted trees up to size $2.5$,
\[ \forest{x}, \quad \forest{b}, \quad \forest{x[x]}, \quad \forest{b[x]}, \quad \forest{x[b]}, \quad \forest{x[x[x]]}, \quad \forest{x[x,x]}, \quad \forest{b[b]}, \quad \forest{b[x[x]]}, \quad \forest{x[b[x]]}, \quad \forest{x[x[b]]}, \quad \forest{b[x,x]}, \quad \forest{x[b,x]}, \quad \forest{x[x[x[x]]]}, \quad \forest{x[x[x,x]]}, \quad \forest{x[x,x[x]]}, \quad \forest{x[x,x,x]}, \]
and the following connected exotic forests up to size $3$,
\[ \forest{b}, \quad \forest{1,1}, \quad \forest{1[1]}, \quad \forest{b[b]}, \quad \forest{b[1,1]}, \quad \forest{b[1],1}, \quad \forest{1[b],1}, \quad \forest{1[2],1,2}, \quad \forest{1[2],1[2]}, \quad \dots. \]

Let $R \sim \NN(0, I)$ denote the Gaussian random variable.

\begin{definition}
    Let $\dF$ be a map that sends grafted forests to the corresponding differential operators. It is defined as
    \[ \dF(\pi)[\cdot] = \sum_{i_r, r \in R(\pi)} \prod_{r \in R(\pi)} \dF(p(r)) [\dF(r)]^{i_r} \partial_{i_r},  \]
    where $R(\pi)$ is the list of roots of $\pi$, $\dF(\bullet) = F(X_0)$, $\dF(\times) = \sigma \Sigma(X_0) R$, and $p(r)$ is the grafted forest connected to $r$ in $\pi$.
\end{definition}
For example, taking $F$ and $\Sigma$ to be evaluated at $X_0$,
\begin{align*}
    \dF(\forest{b[x,x]}) &= \sum_{i=1}^d \dF(\forest{x,x}) [F]^i \partial_i = \sigma^2 \sum_{i,j,k=1}^d (\Sigma R)^j (\Sigma R)^k (\partial_{j,k} F)^i \partial_i, \\
    \dF(\forest{b[x,b],b[x[x]]}) &= \sum_{i,j=1}^d \dF(\forest{x,b})[F]^i \dF(\forest{x[x]})[F]^j \partial_{i,j} \\
    &= \sigma^3 \sum_{i,j,k,l,m,n=1}^d (\Sigma R)^k F^l (\partial_{k,l} F)^i (\Sigma R)^n (\partial_n \Sigma R)^m (\partial_{m} F)^j \partial_{i,j}
\end{align*}

\begin{definition}
    Let $\dF$ be extended to exotic forests $\EF$ by $\dF(k) = \sigma \sum_{a_k=1}^d \Sigma_{a_k}$ for $k \in \N$.
\end{definition}
For example,
\begin{align*}
    \dF(\forest{b[1,1]}) &= \sum_{i=1}^d \dF(\forest{1,1}) [F]^i \partial_i = \sigma^2 \sum_{i,j,k,a=1}^d \Sigma_{ja} \Sigma_{ka} (\partial_{j,k} F)^i \partial_i, \\
    \dF(\forest{b[b[1],1,2],2}) &= \sigma \sum_{i,j,a_2=1}^d \dF(\forest{b[1],1,2})[F]^i \Sigma_{ja_2} \partial_{i,j} \\
    &= \sigma^3 \sum_{i,j,k,l,m,n,a_1,a_2=1}^d \Sigma_{na_1} (\partial_n F)^k \Sigma_{la_1} \Sigma_{ma_2} (\partial_{k,l,m} F)^i \Sigma_{ja_2} \partial_{i,j}.
\end{align*}

The map $\dF$ is an algebraic morphism between the Grossman-Larson algebra of exotic (grafted) forests and the algebra of differential operators with the composition as the product. The Grossman-Larson product is defined as
\[ \pi_1 \gl \pi_2 = \sum_{(\pi_1)} \pi_1^{(1)} (\pi_1^{(2)} \graft \pi_2), \]
where $\Delta(\pi_1) = \sum_{(\pi_1)} \pi_1^{(1)} \otimes \pi_1^{(2)}$ is the deconcatenation coproduct, and $\graft$ is the grafting product that connects all roots of the left operand to vertices of the right operand in all possible ways. For example,
\begin{align*}
    \forest{b,x} \graft \forest{b[x]} &= \forest{b[b,x,x]} + \forest{b[b,x[x]]} + \forest{b[x,x[b]]} + \forest{b[x[b,x]]}, \\
    \forest{b,x} \gl \forest{b[x]} &= \forest{b,x,b[x]} + \forest{b,b[x,x]} + \forest{b,b[x[x]]} + \forest{x,b[b,x]} + \forest{x,b[x[b]]} + \forest{b[b,x,x]} + \forest{b[b,x[x]]} + \forest{b[x,x[b]]} + \forest{b[x[b,x]]}.
\end{align*}

\begin{proposition}\label{prop:dF_gl}\cite{Bronasco22,Laurent_2019}
    Let $\gl$ denote the Grossman-Larson product, then,
    \[ \dF(\pi_1 \gl \pi_2)[\cdot] = \dF(\pi_1)\big[ \dF(\pi_2) [\cdot] \big], \]
    where $\pi_1$ and $\pi_2$ can be either grafted or exotic forests. 
\end{proposition}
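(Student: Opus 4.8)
The plan is to establish that $\dF$ is a morphism of algebras by expanding the composition $\dF(\pi_1)\big[\dF(\pi_2)[\cdot]\big]$ with the Leibniz rule and matching the resulting terms, one by one, with the combinatorial operations of deconcatenation and grafting that define the Grossman--Larson product $\gl$. By linearity it suffices to treat $\pi_1$ and $\pi_2$ as single trees, or more generally to argue directly at the level of forests using the coproduct; an induction on the size (number of roots) of $\pi_1$ is also available if one prefers a recursive formulation.

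First I would write the two operators explicitly. From the definition, $\dF(\pi_1)=\sum_{(i_r)}\big(\prod_{r\in R(\pi_1)}c_r\big)\,\partial_{i_{r_1}\cdots i_{r_m}}$, where $c_r=\dF(p(r))[\dF(r)]^{i_r}$ is the elementary differential carried by the subtree sitting above the root $r$, and the factors $c_r$, being evaluated at $X_0$, multiply while the derivations are collected into the top-order part. Likewise $\dF(\pi_2)[\phi]$ is a sum of products of $F$- and $\Sigma$-valued coefficient functions times a derivative of $\phi$. Composing, the operator $\dF(\pi_1)$ applies its derivations $\partial_{i_r}$ to the function $X_0\mapsto\dF(\pi_2)[\phi](X_0)$, and the generalized Leibniz rule distributes each $\partial_{i_r}$ either onto one of the coefficient functions of $\dF(\pi_2)$ or onto the factor carrying $\phi$.

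The heart of the argument is to read off this dichotomy combinatorially. If $\partial_{i_r}$ passes through to act on (a derivative of) $\phi$, the root $r$ survives as a root of the composite operator; the collection of such roots is exactly a subforest $\pi_1^{(1)}$ left intact by the deconcatenation coproduct and concatenated with the rest. If instead $\partial_{i_r}$ differentiates the coefficient attached to a vertex $v$ of $\pi_2$, then it raises by one the differentiation order of the elementary differential at $v$, which is precisely the effect of grafting the subtree rooted at $r$ onto $v$; the roots of this type form the complementary subforest $\pi_1^{(2)}$, and summing over the choice of target vertex for each of them realizes $\pi_1^{(2)}\graft\pi_2$. Summing over all ways of resolving the dichotomy for every root of $\pi_1$ therefore reproduces $\sum_{(\pi_1)}\pi_1^{(1)}(\pi_1^{(2)}\graft\pi_2)$ termwise, and one verifies that multiplicities agree: the independent choices of target vertices in the Leibniz expansion are in bijection with the ``all possible ways'' in $\graft$, each term occurring exactly once.

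It remains to check that the exotic pairing structure is respected. Since grafting only adds edges from roots of $\pi_1^{(2)}$ to vertices of $\pi_2$ and never alters the grafted vertices nor the pairs they belong to, the contractions $\dF(k)=\sigma\sum_{a_k}\Sigma_{a_k}$ attached to each pair survive the composition unchanged; the Leibniz rule acts on the $F$- and $\Sigma$-valued coefficients and commutes with these fixed pairwise index summations. The step I expect to be most delicate is this combinatorial bookkeeping: confirming that differentiating a coefficient of $\dF(\pi_2)$ yields exactly the elementary differential of the grafted forest, with the new derivative landing on the correct $F$ or $\Sigma$ factor, and that no term is double-counted or lost when a single subforest may graft onto several distinct vertices. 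Once the bijection between Leibniz terms and grafted forests is in place, the identity $\dF(\pi_1\gl\pi_2)=\dF(\pi_1)\big[\dF(\pi_2)[\cdot]\big]$ follows.
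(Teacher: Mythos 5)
The paper does not prove this proposition; it is quoted from the cited references \cite{Bronasco22,Laurent_2019}, so there is no in-paper argument to compare against. Your proof is correct and is essentially the standard one used in those references: the Leibniz-rule dichotomy (each derivative of $\dF(\pi_1)$ either passes through to $\phi$, giving the deconcatenation factor $\pi_1^{(1)}$, or differentiates a vertex coefficient of $\dF(\pi_2)$, realizing the grafting $\pi_1^{(2)}\graft\pi_2$), together with the observation that the paired index contractions of exotic vertices are untouched by grafting, is exactly the intended mechanism, and your bookkeeping of multiplicities matches the ``all possible ways'' in the definition of $\graft$.
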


We use the tree formalism to simplify the computations, for example, we use exotic forests to express the generator $\LL$ and the weak Taylor expansion of the exact solution as
\[ \LL\phi = \dF(\forest{b} + \frac{1}{2} \forest{1,1})[\phi], \quad \E[\phi(X(h))] = \dF(\exp^\gl(\forest{b} + \frac{1}{2} \forest{1,1}))[\phi]. \]

The integration by parts technique, as described in Section \ref{sec:IBP}, induces a transformation on exotic forests, referred to as IBP \cite{Laurent_2019, Vilmart_2015}.

\begin{theorem}[IBP]\label{thm:IBP}
    Let $\pi \in \EF$ be an exotic forest and choose a grafted root $v$ paired to a grafted leaf $u$. Then,
    \[ \langle \pi \rangle = \langle \sum_{\substack{w \in V(\pi) \\ w \notin \{v, u\}}} \pi^{v \to w} + 2 \pi^\bullet \rangle, \]
    where $V(\pi)$ is the set of vertices of $\pi$, $\pi^{v \to w}$ is the exotic forest $\pi$ in which $v$ is connected to $w$, and $\pi^\bullet$ is the exotic forest $\pi$ in which $v$ is removed and $u$ is replaced by a new black vertex.
\end{theorem}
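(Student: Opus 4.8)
The plan is to prove the identity analytically: apply the morphism $\dF$ to both sides, interpret $\langle\cdot\rangle$ as integration against $\rho_\infty$, and show that the two resulting integrals agree after a single integration by parts. First I would unwind the left-hand side. Writing $a$ for the shared pair-index of $v$ and $u$ and $i_v$ for the root-index of $v$, the definition of $\dF$ shows that the grafted root $v$ contributes the outermost derivative $\partial_{i_v}$ acting on $\phi$ together with a factor $\sigma\Sigma_{\cdot a}$ (dressed, if $v$ carries a subtree, by the derivatives coming from that subtree), while the paired grafted leaf $u$ contributes a second factor $\sigma\Sigma_{\cdot a}$ with the same summed index $a$. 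Because $v$ is a root, $\partial_{i_v}$ differentiates only $\phi$, so I can integrate by parts in the coordinate $x_{i_v}$; the polynomial growth of $\phi,F,\Sigma$ together with the decay of $\rho_\infty$ annihilates the boundary terms.

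The single integration by parts produces an overall sign and, via the product rule, distributes $\partial_{i_v}$ over every factor of $(\text{coefficient})\cdot\rho_\infty$. I would then sort the resulting terms into three families. (i) When $\partial_{i_v}$ lands on the vector field attached to a vertex $w\notin\{v,u\}$, the contraction $\sum_{i_v}\Sigma_{i_v a}\partial_{i_v}$ is exactly the derivative of that vector field in the direction of the vector carried by $v$, which is precisely the operator obtained by regrafting the whole subtree of $v$ onto $w$; this family reproduces $\sum_{w\notin\{v,u\}}\pi^{v\to w}$. (ii) When $\partial_{i_v}$ lands on the $\Sigma$-factor carried by $v$ itself or by $u$, one gets $\Div(\Sigma_a)$ and the Jacobian term $\Sigma_a'\Sigma_a$ respectively; summing over $a$ and invoking part~1 of Lemma~\ref{lemma:divD2_D2F} repackages these into $\Div(\Sigma^2)$. (iii) When $\partial_{i_v}$ lands on $\rho_\infty$, the logarithmic-derivative identity $\partial_{i_v}\rho_\infty = -\tfrac{2}{\sigma^2}(\partial_{i_v}V)\rho_\infty$ together with part~2 of Lemma~\ref{lemma:divD2_D2F} turns the contribution into $-\tfrac{2}{\sigma^2}\Sigma^2\nabla V$.

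The key algebraic step is then to combine families (ii) and (iii). Using the definition $F = -\Sigma^2\nabla V + \tfrac{\sigma^2}{2}\Div(\Sigma^2)$, the combination $\tfrac{\sigma^2}{2}\Div(\Sigma^2) - \Sigma^2\nabla V$ collapses to exactly $F$, with the two $\sigma\Sigma$-factors of the pair $(v,u)$ being consumed so that the surviving numerical coefficient is precisely $2$. In tree language this is the operation that deletes $v$ and recolors $u$ as a black vertex, that is, $2\pi^\bullet$. Collecting the three families (with the sign and coefficients fixed by the single integration by parts) gives the asserted identity, consistent with the worked example $\langle\dF(\forest{1,1,b})\rangle = -\langle\dF(2\forest{b,b}+\forest{1,b[1]})\rangle$ of Section~\ref{sec:IBP}.

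I expect the main obstacle to be the bookkeeping in the general case rather than any single analytic estimate. Two points need care. First, when $v$ carries a nontrivial subtree, the factor attached to $v$ is itself a differentiated vector field, so one must verify that the product rule still routes each term to the correct $\pi^{v\to w}$ or $\pi^\bullet$, with the entire subtree of $v$ travelling along. Second, the powers of $\sigma$ must be tracked exactly, so that after applying the two parts of Lemma~\ref{lemma:divD2_D2F} the coefficient of $\pi^\bullet$ emerges as $2$ and not some other multiple. Checking that the combinatorial operations $\pi^{v\to w}$ and $\pi^\bullet$ are well-defined on all of $\EF$ — in particular that deleting the root $v$ and recoloring the leaf $u$ respects the pairing structure of the remaining grafted vertices — then completes the argument.
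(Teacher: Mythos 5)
Your overall strategy --- push $\dF$ through, interpret $\langle\cdot\rangle$ as integration against $\rho_\infty$, integrate by parts once in the root index $i_v$, and sort the product-rule terms into regrafting terms, $\Sigma$-hit terms, and a $\rho_\infty$-hit term that recombine into $F$ --- is exactly the mechanism the paper itself exhibits: the paper gives no standalone general proof of Theorem~\ref{thm:IBP} (it defers to the cited works of Laurent--Vilmart and Vilmart), and its Section~\ref{sec:IBP} examples are precisely your computation in the special cases $\forest{1,1,b}$ and $\forest{1,b[1]}$; your coefficient check $\sigma^2\Div(\Sigma^2)-2\,\Sigma^2\nabla V=2F$ is the right heart of the argument. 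However, there is a concrete inconsistency in your conclusion concerning the sign. The single integration by parts gives an overall minus, so what your argument actually establishes is $\langle\pi\rangle=-\langle\sum_{w\notin\{v,u\}}\pi^{v\to w}+2\pi^\bullet\rangle$. This agrees with the paper's worked example $\langle\dF(\forest{1,1,b})\rangle=-\langle\dF(2\forest{b,b}+\forest{1,b[1]})\rangle$ and with how the theorem is invoked in the proof of Lemma~\ref{lemma:A2_cond}, namely $\langle\dF(\frac{1}{4}\forest{1,b[1]})\rangle=-\langle\dF(\frac{1}{2}\forest{b[b]}+\frac{1}{4}\forest{b[1,1]})\rangle$, but it contradicts the identity as printed in the theorem, which carries no minus sign. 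You cannot claim, as you do, that your result is simultaneously ``the asserted identity'' and ``consistent with the worked example''; the printed statement is evidently missing a minus sign, and a correct proof must say explicitly which version it establishes rather than blur the discrepancy.

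The second genuine issue is the scope of $v$. Your family-(ii) identification --- $\partial_{i_v}$ hitting $v$'s own factor yields $\Div(\Sigma_a)$ and hitting $u$'s factor yields $\Sigma_a'\Sigma_a$ --- silently presupposes that $v$ is a \emph{bare} grafted vertex. You flag the case where $v$ carries a subtree as bookkeeping that ``one must verify,'' but that verification would in fact fail: if $v$ has a nontrivial subtree $p_v$, its factor is $A_a=\dF(p_v)[\sigma\Sigma_a]$, and the term where $\partial_{i_v}$ lands on this factor is $\sum_a \Div(A_a)\,\sigma\Sigma_{ja}$, the divergence of a \emph{differentiated} column of $\Sigma$ --- an aromatic-type object not representable in $\EF$ --- which does not combine with the $u$-hit and $\rho_\infty$-hit terms into $2F$, since that collapse requires the undifferentiated identities $\Div(\Sigma^2)=\sum_a\big(\Sigma_a\Div(\Sigma_a)+\Sigma_a'\Sigma_a\big)$ and $\Sigma^2\nabla V=\sum_a\Sigma_a(\Sigma_a\cdot\nabla V)$ of Lemma~\ref{lemma:divD2_D2F}. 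So the terms do \emph{not} all route to some $\pi^{v\to w}$ or to $\pi^\bullet$ in that case; the clean statement holds only when $v$ is a single grafted vertex. This is consistent with the paper's original definition (where $\times$ occurs only at leaves, so a grafted root is automatically isolated) and with every application made of the theorem, but your proof should impose this restriction explicitly instead of deferring it --- the general subtree case genuinely requires the exotic \emph{aromatic} framework of the cited references.
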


\subsection{Proof of Theorem \ref{thm:main_thm}}\label{sec:proof}

Let us start by recalling the statement and the setting of Theorem \ref{thm:main_thm}.
We consider the following integrator,
\begin{align}
  X_{n+1} &= X_n + hF(\overline{X}_n) + \hat\Phi^\Sigma_h (X_n + \frac{1}{4} h F(\overline{X}_{n-1})), \nonumber \\
    \overline{X}_n &= X_n + \frac{1}{2} \sqrt{h} \sigma \Sigma(X_n) R_n, \quad \text{with } \overline{X}_{-1} = X_0, \label{eq:new_scheme2}
\end{align}
where $\Phi^\Sigma_h(X_n) = X_n + \hat\Phi^\Sigma_h (X_n) = X_n + \sqrt{h} \sigma \Sigma(X_n) R_n + \OO(h)$ is an integrator of weak order $2$ applied to the problem $dX = \sigma \Sigma(X) dW$.

\begin{customthm}{3.1}
    The integrator of the form (\ref{eq:new_scheme2}) is of order $2$ with respect to the invariant measure.
\end{customthm}

We consider a modification of integrator (\ref{eq:new_scheme2}) which is written as a scheme $X_{n+1} = \Phi_h (X_n)$ and a postprocessor $\overline{X}_n = \Psi_h(X_n)$ with
\begin{align}
    \Phi_h (X_n) &= X_n + hF(Y) + \hat\Phi^\Sigma_h (X_n + \frac{1}{4} h F(X_n)), \nonumber \\
    Y &= X_n + \frac{1}{2} \sqrt{h} \sigma \Sigma(X_n) R_n, \nonumber \\
    \Psi_h (X_n) &= X_n + \frac{1}{2} \sqrt{h} \sigma \Sigma(X_n) R_n, \label{eq:postprocessor2}
\end{align}
where $F(\overline{X}_{n-1})$ of (\ref{eq:new_scheme2}) is replaced by $F(X_n)$. We note that the integrator (\ref{eq:postprocessor2}) requires $2$ evaluations of $F$ per timestep.

\begin{lemma}\label{lemma:A2_cond}
    Given a post-processor of the form (\ref{eq:postprocessor2}) and an integrator $\Phi_h$ which admits weak Taylor expansion with $\AA_1 = \LL$ has order $2$ with respect to the invariant measure if
    \begin{equation}\label{eq:A2_cond}
        \langle \AA_2 \rangle = \langle \dF(\frac{\forest{b,b}}{2} + \frac{\forest{b,1,1}}{2} + \frac{\forest{1[b],1}}{4} + \frac{\forest{1,b[1]}}{2} + \frac{\forest{b[1,1]}}{8} + \frac{\forest{1,1,2,2}}{8} + \frac{\forest{1,2,2[1]}}{2} + \frac{\forest{2[1],2[1]}}{4} + \frac{\forest{2,2[1,1]}}{4}) \rangle.
    \end{equation}
\end{lemma}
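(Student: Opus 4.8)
The plan is to invoke the post-processor order theory of Theorem~\ref{thm:postprocessor} with $p=1$ and then reduce the resulting condition to \eqref{eq:A2_cond} inside the exotic tree calculus. Since $\Phi_h$ has $\AA_1=\LL$ by assumption, the hypotheses of Theorem~\ref{thm:ord_cond_inv_meas} hold vacuously for $p=1$, so $\Phi_h$ is already of order $1$ for the invariant measure and Theorem~\ref{thm:postprocessor} applies. It then suffices to exhibit the post-processor correction $\AAb_1$ and to show that $\langle\AA_2+[\LL,\AAb_1]\rangle=0$ holds precisely when $\langle\AA_2\rangle$ equals the right-hand side of \eqref{eq:A2_cond}; equivalently, that $-\langle[\LL,\AAb_1]\rangle$ coincides with that nine-term functional.

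First I would read off $\AAb_1$ from $\Psi_h(X_0)=X_0+\tfrac12\sqrt h\,\sigma\Sigma(X_0)R$. Expanding $\phi$ and taking the expectation over $R\sim\NN(0,I)$ annihilates every odd power of $\sqrt h$, so there is neither an $O(\sqrt h)$ term nor a first-order $\alpha_1 h\LL$ term, matching the form demanded by Theorem~\ref{thm:postprocessor}; the $O(h)$ coefficient is $\AAb_1=\tfrac18\dF(\forest{1,1})=\tfrac{\sigma^2}{8}\sum_a\phi''(\Sigma_a,\Sigma_a)$. Next I would evaluate the commutator in the Grossman--Larson algebra through Proposition~\ref{prop:dF_gl}, using the generator forest $\ell=\forest{b}+\tfrac12\forest{1,1}$ (so $\dF(\ell)=\LL$) and keeping the pairing of $\AAb_1$ distinct from that of $\ell$. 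The noise--noise contribution cancels because any operator commutes with itself, leaving $[\LL,\AAb_1]=\tfrac18\dF(\forest{b}\gl\forest{1,1}-\forest{1,1}\gl\forest{b})$. Expanding the two products gives $\forest{b}\gl\forest{1,1}=\forest{b,1,1}+2\forest{1[b],1}$ and $\forest{1,1}\gl\forest{b}=\forest{b,1,1}+\forest{b[1,1]}+2\forest{1,b[1]}$, whence $-[\LL,\AAb_1]=\tfrac18\dF(\forest{b[1,1]}+2\forest{1,b[1]}-2\forest{1[b],1})$, and the order condition reduces to $\langle\AA_2\rangle=\tfrac18\langle\forest{b[1,1]}+2\forest{1,b[1]}-2\forest{1[b],1}\rangle$.

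It remains to identify this three-term functional with the nine-term right-hand side of \eqref{eq:A2_cond}. The key observation is that expanding $\tfrac12\,\ell\gl\ell$ (again with distinct labels) reproduces exactly the four double-pairing forests $\tfrac18\forest{1,1,2,2}+\tfrac12\forest{1,2,2[1]}+\tfrac14\forest{2[1],2[1]}+\tfrac14\forest{2,2[1,1]}$ together with single-pairing terms, so that the stated nine-term expression equals $\tfrac12\,\dF(\ell\gl\ell)-\tfrac12\dF(\forest{b[b]})-\tfrac14\dF(\forest{1[b],1})-\tfrac18\dF(\forest{b[1,1]})$. Since $\langle\LL\psi\rangle=0$ for all $\psi\in C^\infty_P(\R^d)$ by stationarity of $\rho_\infty$, taking $\psi=\LL\phi$ yields $\langle\dF(\ell\gl\ell)\rangle=\langle\LL^2\rangle=0$, which removes all four double-pairing forests at once. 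The remaining scalar identity $-\tfrac12\langle\forest{b[b]}\rangle-\tfrac14\langle\forest{1[b],1}\rangle-\tfrac18\langle\forest{b[1,1]}\rangle=\tfrac18\langle\forest{b[1,1]}\rangle+\tfrac14\langle\forest{1,b[1]}\rangle-\tfrac14\langle\forest{1[b],1}\rangle$ then follows from the single integration-by-parts relation $\langle\forest{1,b[1]}\rangle=-\langle2\forest{b[b]}+\forest{b[1,1]}\rangle$ recorded in Section~\ref{sec:IBP} (Theorem~\ref{thm:IBP}).

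I expect the main obstacle to be the exotic Grossman--Larson bookkeeping: keeping the two pairings separate so that the self-commutator vanishes while the genuinely quadratic noise forests survive, and orienting the integration-by-parts reductions with the correct signs. The identity $\langle\LL^2\rangle=0$ is what lets the otherwise IBP-irreducible double-pairing forests $\forest{2[1],2[1]}$ and $\forest{2,2[1,1]}$ disappear without extra work, so recognizing the $\tfrac12\,\ell\gl\ell$ structure is the step that turns an unwieldy term-by-term reduction into a short computation.
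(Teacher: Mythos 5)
Your proposal is correct and follows essentially the same route as the paper: Theorem \ref{thm:postprocessor} with $p=1$, the identification $\AAb_1=\dF(\tfrac18\forest{1,1})$, the same Grossman--Larson commutator $[\LL,\AAb_1]=\dF(\tfrac14\forest{1[b],1}-\tfrac14\forest{1,b[1]}-\tfrac18\forest{b[1,1]})$, the observation $\langle\LL^2\rangle=0$, and the single IBP relation $\langle\dF(\forest{1,b[1]})\rangle=-\langle\dF(2\forest{b[b]}+\forest{b[1,1]})\rangle$. The only difference is organizational: the paper builds the nine-term expression from $\langle\tfrac{\LL^2}{2}-[\LL,\AAb_1]\rangle$ and then applies IBP, whereas you reduce the nine-term target back down to $-\langle[\LL,\AAb_1]\rangle$; the two computations are identical read in opposite directions.
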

\begin{proof}
    By Theorem \ref{thm:postprocessor}, an integrator $\Phi_h$ with $\AA_1 = \LL$ has order $2$ with respect to the invariant measure if $\AA_2$ satisfies
    \begin{equation}\label{eq:A2_general_cond}
        \langle \AA_2 \rangle = \langle \frac{\LL^2}{2} - [\LL, \overline{\AA}_1] \rangle,
    \end{equation}
    where we note that $\langle \frac{\LL^2}{2} \rangle = 0$.
    We use the tree formalism to express the condition (\ref{eq:A2_general_cond}) explicitly. We start by noting that
    \[ \LL = \dF(\forest{b} + \frac{1}{2} \forest{1,1}), \quad \text{and} \quad \AAb_1 = \dF(\frac{1}{8} \forest{1,1}). \]
    This implies, using Proposition \ref{prop:dF_gl}, that 
    \[ [\LL, \AAb_1] = \dF(\frac{1}{4} \forest{1[b],1} - \frac{1}{4} \forest{1,b[1]} - \frac{1}{8} \forest{b[1,1]}). \]
    Using Proposition \ref{prop:dF_gl}, we express $\LL^2$ as
    \[ \LL^2 = \dF(\forest{b,b} + \forest{b[b]} + \forest{b,1,1} + \forest{1[b],1} + \forest{1,b[1]} + \frac{1}{2} \forest{b[1,1]} + \frac{1}{4} \forest{1,1,2,2} + \forest{1,2,2[1]} + \frac{1}{2} \forest{2[1],2[1]} + \frac{1}{2} \forest{2,2[1,1]}). \]
    This gives us the following condition on $\AA_2$:
    \[ \langle \AA_2 \rangle = \langle \dF( \frac{\forest{b,b}}{2} + \frac{\forest{b[b]}}{2} + \frac{\forest{b,1,1}}{2} + \frac{\forest{1[b],1}}{4} + \frac{3 \forest{1,b[1]}}{4} + \frac{3 \forest{b[1,1]}}{8} + \frac{\forest{1,1,2,2}}{8} + \frac{\forest{1,2,2[1]}}{2} + \frac{\forest{2[1],2[1]}}{4} + \frac{\forest{2,2[1,1]}}{4} ) \rangle. \]
    We use IBP from Theorem \ref{thm:IBP} with $\langle \dF(\frac{\forest{1,b[1]}}{4}) \rangle = - \langle \dF( \frac{\forest{b[b]}}{2} + \frac{\forest{b[1,1]}}{4}) \rangle$ as follows
    \[ \langle \AA_2 \rangle = \langle \dF( \frac{\forest{b,b}}{2} + \frac{\forest{b,1,1}}{2} + \frac{\forest{1[b],1}}{4} + \frac{\forest{1,b[1]}}{2} + \frac{\forest{b[1,1]}}{8} + \frac{\forest{1,1,2,2}}{8} + \frac{\forest{1,2,2[1]}}{2} + \frac{\forest{2[1],2[1]}}{4} + \frac{\forest{2,2[1,1]}}{4} ) \rangle. \]
    This finishes the proof.
\end{proof}

\begin{proposition}
  The post-processed integrator $\overline{X}_n = (\Psi_h \circ \Phi_h^n) (X_0)$ of (\ref{eq:postprocessor2}) is of order $2$ with respect to the invariant measure.
\end{proposition}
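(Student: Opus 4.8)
The plan is to reduce the claim to Lemma \ref{lemma:A2_cond}. Since the post-processor $\Psi_h$ appearing in (\ref{eq:postprocessor2}) is exactly the one treated there, it suffices to compute the weak Taylor expansion $\E[\phi(\Phi_h(X_0))] = \phi + h\AA_1\phi + h^2\AA_2\phi + \cdots$ of the integrator $\Phi_h$, to confirm that its leading operator is $\AA_1 = \LL$, and to verify that $\langle \AA_2\rangle$ coincides with the right-hand side of (\ref{eq:A2_cond}). Combined with the moment bounds guaranteed by the global Lipschitz hypothesis on $F,\Sigma$ (so that the premises of Theorem \ref{thm:ord_cond_inv_meas}, inherited by Theorem \ref{thm:postprocessor} and hence by Lemma \ref{lemma:A2_cond}, are met), this gives order two for the invariant measure.

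First I would expand $\Phi_h(X_0) = X_0 + hF(Y) + \hat\Phi^\Sigma_h(Z)$, with $Y = X_0 + \tfrac12\sqrt h\,\sigma\Sigma(X_0)R$ and $Z = X_0 + \tfrac14 hF(X_0)$, into an exotic B-series. The drift term $hF(Y)$ is developed by Taylor-expanding $F$ around $X_0$ in the noise $\tfrac12\sqrt h\,\sigma\Sigma R$, which grafts noise leaves onto a black root and produces grafted trees with prefactors $h,\ \tfrac12 h^{3/2},\ \tfrac18 h^2,\dots$; the noise integrator $\hat\Phi^\Sigma_h$, being of weak order two for $dX=\sigma\Sigma dW$, contributes precisely the truncation of $\exp^\gl(\tfrac12\forest{1,1})$, namely $\tfrac12\forest{1,1}$ at order $h$ and $\tfrac12(\tfrac12\forest{1,1}\gl\tfrac12\forest{1,1})$ at order $h^2$, evaluated at the shifted base point $Z$. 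Each grafted leaf represents a factor $\sigma\Sigma R$ built from the \emph{same} increment $R$; pairing all such leaves (Gaussian contraction $\E[R_aR_b]=\delta_{ab}$) turns the grafted forests into exotic forests, and $\dF$ together with Proposition \ref{prop:dF_gl} converts the resulting combination into differential operators. Reading off the size-one forests yields $\AA_1 = \dF(\forest{b}+\tfrac12\forest{1,1}) = \LL$, confirming consistency, and collecting all size-two forests gives $\AA_2$.

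The main obstacle is the bookkeeping of the shared noise $R$ between the shift in $Y$ and the leading noise $\sqrt h\,\sigma\Sigma(Z)R$ of $\hat\Phi^\Sigma_h$. Because a grafted leaf originating from $Y$ may be paired with one originating from the noise integrator, the expansion contains genuinely mixed exotic forests such as $\forest{1[b],1}$, $\forest{1,b[1]}$, $\forest{1,2,2[1]}$ and $\forest{2[1],2[1]}$; these couplings are invisible to a naive ``drift-plus-independent-noise'' expansion, and they are exactly what makes the scheme order two for the invariant measure while remaining weak order one. The prototypical nonvanishing contribution at order $h^2$ is the cross-term $\tfrac12\E[\phi''(\sqrt h\,\sigma\Sigma R,\ \tfrac12 h^{3/2}F'(\sigma\Sigma R))]$, whose two arguments pair into the tree $\forest{1,b[1]}$, together with its analogues generated through the $\tfrac14 hF$ shift in $Z$. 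Their coefficients are fully fixed by the weak-order-two property and the leading-noise normalization $\hat\Phi^\Sigma_h(X_0)=\sqrt h\,\sigma\Sigma(X_0)R+\OO(h)$, so the computation—and hence the conclusion—does not depend on the particular choice of noise integrator (MT2 or W2Ito1).

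Finally, the computed $\langle\AA_2\rangle$ will in general be expressed in a combination of exotic forests that differs from, but is $\langle\cdot\rangle$-equivalent to, the target: in particular a term in $\forest{b[b]}$ is expected to appear, just as in the proof of Lemma \ref{lemma:A2_cond}. I would then apply IBP (Theorem \ref{thm:IBP}) to the grafted-root/grafted-leaf pairs—exactly as in the worked example of Section \ref{sec:IBP}—to eliminate such terms and reduce the expression to the canonical right-hand side of (\ref{eq:A2_cond}). Matching term by term closes the argument and establishes order two with respect to the invariant measure.
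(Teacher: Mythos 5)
Your proposal is correct and takes essentially the same route as the paper: reduce to Lemma \ref{lemma:A2_cond}, expand $\Phi_h$ of (\ref{eq:postprocessor2}) into exotic forests --- the drift $hF(Y)$ contributing $\dF(\frac12\forest{b,b}+\frac12\forest{b,1,1}+\frac12\forest{1,b[1]}+\frac18\forest{b[1,1]})$ and the weak-order-two noise integrator at the shifted point contributing the remaining forests --- and match $\langle\AA_2\rangle$ against (\ref{eq:A2_cond}). One small correction to your final paragraph: no $\forest{b[b]}$ term actually arises in this $\AA_2$ (the only $F$-dependence enters through $Y-X_n=\frac12\sqrt{h}\,\sigma\Sigma(X_n)R_n$ and the shift $\frac14 hF(X_n)$ inside $\hat\Phi^\Sigma_h$, which generate $\forest{1,b[1]}$- and $\forest{1[b],1}$-type trees rather than $F'F$), so the sum matches the right-hand side of (\ref{eq:A2_cond}) identically and the extra IBP step is unnecessary --- that integration by parts was already absorbed into the derivation of (\ref{eq:A2_cond}) within Lemma \ref{lemma:A2_cond}.
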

\begin{proof}
  The post-processed integrator $\overline{X}_n = (\Psi_h \circ \Phi_h^n) (X_0)$ of (\ref{eq:postprocessor2}) has $\AA_1 = \LL$. To see that it is of order $2$ with respect to the invariant measure, we use Lemma \ref{lemma:A2_cond}, that is, we check that $\AA_2$ satisfies (\ref{eq:A2_cond}). The differential operators appearing in the weak Taylor expansion of $\Phi^\Sigma_h$ are denoted by $\AA^\Sigma_j$ where $j\in\N$ with $\AA^\Sigma_1 = \dF(\frac{1}{2}\forest{1,1})$. Therefore, the differential operator $\AA_2$ has the form
  \[ \AA_2 = \dF(\frac{1}{2} \forest{b,b} + \frac{1}{2} \forest{b,1,1} + \frac{1}{2} \forest{1,b[1]} + \frac{1}{8} \forest{b[1,1]}) + \AA^\Sigma_2. \]
  Since $\Phi^\Sigma_h$ is weak order $2$, $\AA_2^\Sigma$ of $\Phi^\Sigma_h (X_n + \frac{1}{4} h F(X_n))$ is
  \[ \AA_2^\Sigma = \dF(\frac{\forest{1[b],1}}{4} + \frac{\forest{1,1,2,2}}{8} + \frac{\forest{1,2,2[1]}}{2} + \frac{\forest{2[1],2[1]}}{4} + \frac{\forest{2,2[1,1]}}{4}). \]
  Therefore, the condition (\ref{eq:A2_cond}) is satisfied and the method has order $2$ with respect to the invariant measure.
\end{proof}

We are now ready to prove Theorem \ref{thm:main_thm} and to show that the post-processed integrator of (\ref{eq:new_scheme2}) has order $2$ with respect to the invariant measure.

\begin{proof}\textbf{of Theorem \ref{thm:main_thm}}
  We recall that the only difference between the integrators described by (\ref{eq:new_scheme2}) and (\ref{eq:postprocessor2}) is the replacement of $F(\overline{X}_{n-1})$ by $F(X_n)$ which simplifies the analysis. Let us now show that the differential operators $\AA_2$ of (\ref{eq:new_scheme2}) and (\ref{eq:postprocessor2}) are identical and, therefore, the integrator (\ref{eq:new_scheme2}) is of order $2$ with respect to the invariant measure.

  We have the following identity
  \begin{align*}
    \overline{X}_{n-1} &= X_{n-1} + \frac{1}{2} \sqrt{h} \sigma \Sigma(X_{n-1}) R_{n-1} \\
    &= X_n - h F(\overline{X}_{n-1}) - \hat\Phi^\Sigma_h(X_{n-1} + \frac{1}{4} h F(\overline{X}_{n-2})) + \frac{1}{2} \sqrt{h} \sigma \Sigma(X_{n-1}) R_{n-1} \\
    &= X_n - \frac{1}{2} \sqrt{h} \sigma \Sigma(X_n) R_{n-1} + \OO(h).
  \end{align*}
  Therefore, the $\hat\Phi^\Sigma_h$ term of (\ref{eq:new_scheme2}) has the following form
  \[ \hat\Phi^\Sigma_h \big( X_n + \frac{1}{4} h F(\overline{X}_{n-1}) \big) = \hat\Phi^\Sigma_h \big( X_n + \frac{1}{4} h F(X_n) - \frac{1}{8} h \sqrt{h} \sigma F^\prime (X_n) \Sigma(X_n) R_{n-1} + \OO(h^2) \big). \]
  We see that the Taylor expansion of the $\hat\Phi^\Sigma_h$ term of (\ref{eq:new_scheme2}) differs from the Taylor expansion of the $\hat\Phi^\Sigma_h$ term of (\ref{eq:postprocessor2}) by
  \[ - \frac{1}{8} h^2 \sigma^2 (\Sigma R_n)^\prime F^\prime \Sigma R_{n-1} + \OO(h^{2.5}), \]
  which has expectation $\OO(h^3)$, and, thus, the differential operators $\AA_2$ of (\ref{eq:new_scheme2}) and (\ref{eq:postprocessor2}) are identical.
\end{proof}

\begin{remark} \label{rem:93conditions}
    A direct approach, involving the computation of order conditions for second order with respect to the invariant measure, followed by the solution of the resulting system, proved to be too challenging to perform manually as it required solving a system of 93 order conditions. A subset of these conditions is listed below:
    \begin{multicols}{2}
    \begin{enumerate}
        \item $a_\sigma(\forest{b,1,1}) - 2 a_\sigma (\forest{1,1,2,2}) = 0$,
        \item $a_\sigma(\forest{1,2[1],2}) - 2 a_\sigma (\forest{1,1,2,2}) = 0$,
        \item $a_\sigma(\forest{1[1],2,2}) - 2 a_\sigma (\forest{1,1,2,2}) = 0$,
        \item $a_\sigma(\forest{1[b],1}) = 0$,
        \item $a_\sigma(\forest{b[1],1}) = 0$,
        \item[$\cdots$]
        \item[91.] $a_\sigma(\forest{b[1,1]}) = 0$,
        \item[92.] $a_\sigma(\forest{1[b[1]]}) = 0$,
        \item[93.] $a_\sigma(\forest{1,(1,b)}) = 0$,
    \end{enumerate}
    \end{multicols}
\noindent    where $a_\sigma(\pi)$ denotes the Runge-Kutta coefficient corresponding to the forest $\pi$ and divided by its symmetry.
\end{remark}

\section{Mean-square stability analysis}
\label{sec:stab}

We observe that the next step, \( X_{n+1} \), in the PVD-2 method (\ref{eq:new_scheme}) proposed here is dependent on both \( X_n \) and \( \overline{X}_{n-1} \). To analyze the stability of this method effectively, we express it in a partitioned form \( X^P_{n+1} = \Phi^P_h (X^P_n) \) where \( X^P_n = (X_n^T, \overline{X}^T_{n-1})^T \):

\begin{equation}\label{eq:partitioned}
\begin{pmatrix}
    X_{n+1} \\ \overline{X_n}
\end{pmatrix}
=
\Phi^P_h \begin{pmatrix}
    X_n \\ \overline{X}_{n-1}
\end{pmatrix}
=
\begin{pmatrix}
    X_n + hF(\overline{X}_n) + \hat\Phi^\Sigma_h \left(X_n + \frac{1}{4} hF(\overline{X}_{n-1})\right) \\
    X_n + \frac{1}{2} \sqrt{h} \sigma \Sigma(X_n) R_n
\end{pmatrix}.
\end{equation}

\subsection{Stability domain for mean-square stiff problems}
We consider the following test problem in dimension \( d = 1 \), which is introduced in \cite{Saito_Mitsui} and widely used in the literature \cite{SKROCK,Burrage_Burrage,Higham,Tocino} for studying the mean-square stability of integrators applied to stiff problems: \begin{equation}\label{eq:testproblem}
 dX(t) = \lambda X(t) dt + \mu X(t) dW(t), \quad X(0) = 1, 
\end{equation}
where \( \lambda \) and \( \mu \) are fixed complex parameters. After applying the new method to the test problem, we obtain the stability matrix \( R(p,q,R_n) \) of the following form, with \( p = \lambda h \) and \( q = \mu \sqrt{h} \),

\begin{equation}\label{eq:stability_matrix}
\begin{pmatrix}
    X_{n+1} \\ \overline{X}_n
\end{pmatrix}
=
\begin{pmatrix}
    1 + p + \frac{1}{2} p q R_n + \hat{R}^\Sigma (p,q,R_n) & \frac{1}{4} p \hat{R}^\Sigma(p,q,R_n) \\
    1 + \frac{1}{2} q R_n & 0
\end{pmatrix}
\begin{pmatrix}
    X_n \\ \overline{X}_{n-1}
\end{pmatrix},
\end{equation}

where \( R^\Sigma(p,q,R_n) \) is the stability function of the noise integrator and \( \hat{R}^\Sigma = R^\Sigma - 1 \).
Following the ideas from Saito-Mitsui \cite{Saito_Mitsui}, we consider \( \E[X_{n+1}^P {X_{n+1}^P}^T] \) and obtain the following equation with $R = R(p, q, R_n)$ being the stability matrix from (\ref{eq:stability_matrix}),

\begin{equation}\label{eq:E_stability_matrix}
\begin{pmatrix}
    \E[X_{n+1}^2] \\ \E[\overline{X}^2_n] \\ \E[X_{n+1} \overline{X}_n]
\end{pmatrix}
=
\begin{pmatrix}
    \E[R_{11}^2] & \E[R_{12}^2] & 2 \E[R_{11} R_{12}] \\
    \E[R_{21}^2] & 0            & 0                   \\
    \E[R_{21} R_{11}] & 0       & \E[R_{21} R_{12}]
\end{pmatrix}
\begin{pmatrix}
    \E[X_n^2] \\ \E[\overline{X}^2_{n-1}] \\ \E[X_n \overline{X}_{n-1}]
\end{pmatrix}.
\end{equation}

The mean-square stability region of method (\ref{eq:new_scheme}), that is the domain of $p,q$ such that the second moments of the numerical solution $\E(X_n^2),\E(\overline{X}_n^2)$ tend to 0 as $n\rightarrow +\infty$, is then computed by checking the values of \( p \) and \( q \) for which the largest eigenvalue of the matrix in (\ref{eq:E_stability_matrix}) is smaller than \( 1 \). To do this, we need to choose the noise integrator. We note that both noise integrators from Section \ref{sec:new_method} have the following stability function
\begin{equation}\label{eq:noise_stability_function}
    R^\Sigma(p,q,R_n) = 1 + q R_n + \frac{q^2}{2} (R_n^2 - 1).
\end{equation}
The resulting stability region is computed numerically for real $p$ and is presented in Figure \ref{fig:stability_region_1}. For comparison, the light gray region in Figure \ref{fig:stability_region_1} is the stability region of the exact solution for which $\E(X(t)^2)\rightarrow 0$ as $t\rightarrow +\infty$ and is given by the condition $\Real(\lambda)+\frac{|\mu|^2}2< 0$.

\begin{figure}
    \centering
    \subfloat[New method (\ref{eq:new_scheme})]{
        \includegraphics[width=0.3\textwidth]{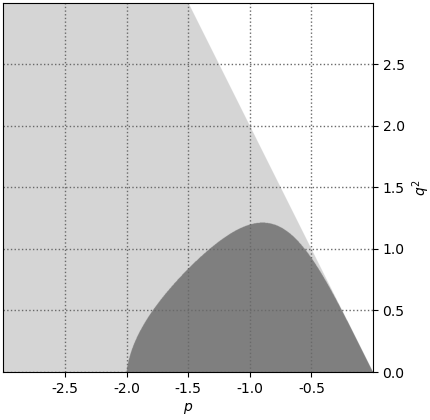}
        \label{fig:stability_region_1}
    }
    \subfloat[Modification 1 (\ref{eq:modification_1})]{
        \includegraphics[width=0.3\textwidth]{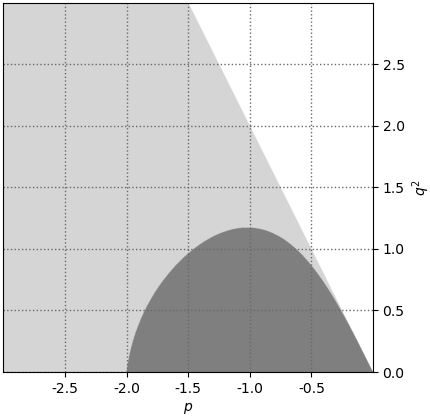}
        \label{fig:stability_region_2}
    }
    \subfloat[Modification 2 (\ref{eq:modification_2})]{
        \includegraphics[width=0.3\textwidth]{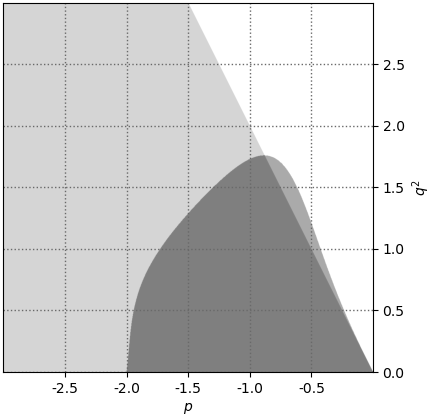}
        \label{fig:stability_region_3}
    }
    \caption{Mean-square stability domains of the PVD-2 method (\ref{eq:new_scheme}) and modifications (\ref{eq:modification_1}) and (\ref{eq:modification_2}), 		
		for which $\E(X_n^2) \rightarrow 0$ for the scalar test problem  \eqref{eq:testproblem} in the $(p,q^2)$--plane where $p=\lambda h,q=\mu \sqrt h$.
		}
\end{figure}

\subsection{Improving the stability}

We consider slight modifications to the method to enhance the stability region without compromising the order of convergence or significantly increasing the computational cost. The first modification brings the term \(hF(\overline{X}_{n-1})\) inside the \(\sqrt{h}\) term of the Taylor expansion of the noise integrator. This is enough to obtain the desired order $2$ and simplifies the stability matrix in (\ref{eq:stability_matrix}). We obtain the following method
\begin{align}
  X_{n+1} &= X_n + hF(\overline{X}_n) + \hat\Phi^\Sigma_h (X_n, X_n + \frac{h}{4} F(\overline{X}_{n-1})), \nonumber \\
    \overline{X}_n &= X_n + \frac{1}{2} \sqrt{h} \sigma \Sigma(X_n) R_n, \quad \text{with } \overline{X}_{-1} = X_0, \label{eq:modification_1}
\end{align}
with the following choice of the noise integrator \(\Phi_h^\Sigma (X_n, X_n^{(1)})\):
\begin{enumerate}
    \item Modified weak order \( 2 \) method from \cite{Abdulle_Vilmart_Zygalakis_2013}
    \begin{align*}
        \Phi_h^\Sigma (X_n, X_n^{(1)}) &= X_n + \frac{1}{2} \sum_{a=1}^d \bigg( \sigma \Sigma_a \big(X_n + \sigma \Sigma(X_n)J_a \big) - \sigma \Sigma_a \big(X_n - \sigma \Sigma(X_n)J_a \big) \bigg) \\
        &\quad + \frac{\sigma\sqrt{h}}{2} \bigg( \sigma \Sigma \big( X_n^{(1)} + \sqrt{\frac{h}{2}} \sigma \Sigma(X_n) \chi \big) + \sigma \Sigma \big( X_n^{(1)} - \sqrt{\frac{h}{2}} \sigma \Sigma(X_n) \chi \big) \bigg) R_n.
    \end{align*}
    
    \item Modified W2Ito1 method from \cite{Tang2017}
    \begin{align*}
        \Phi_h^\Sigma (X_n, X_n^{(1)}) &= X_n + \sqrt{h} \sum_{a=1}^d \big( - \sigma \Sigma_a(X_n) + \sigma \Sigma_a(K^{(a)}_1) + \sigma \Sigma_a(K^{(a)}_2) \big) R_{n,a} \\
        &\quad\quad + 2 \sqrt{h} \sum_{a=1}^d \big(\sigma \Sigma_a(X_n) - \sigma \Sigma_a (K^{(a)}_2) \big) \hat{J}_{a,a}, \\
        K^{(a)}_1 &= X^{(1)}_n + \frac{\sqrt{h}}{2} \sigma \Sigma_a(X_n) \hat{\chi}_1 + \sqrt{h} \sum_{\substack{b=1\\b \neq a}}^d \sigma \Sigma_b(X_n) \hat{J}_{a,b}, \\
        K^{(a)}_2 &= X_n - \frac{\sqrt{h}}{2} \sigma \Sigma_a(X_n)\hat\chi_1.
    \end{align*}
\end{enumerate}
The notation coincides with that used in the examples of Section \ref{sec:new_method}. The improved stability region can be found in Figure \ref{fig:stability_region_2}.
The next improvement of the stability region is achieved by modifying the Milstein-Tretyakov term of the noise integrator. We note that the term \(\frac{q^2}{2} (R_n^2 - 1)\) from (\ref{eq:noise_stability_function}) results in the term \(q^4\) in \( \E[R_{11}^2], \E[R_{12}^2], \E[R_{11} R_{12}] \) from (\ref{eq:E_stability_matrix}). We decrease the significance of this term by multiplying it by \(1 + \frac{p}{2}\) which goes to \(0\) as \(p\) approaches \(-2\). This is achieved by replacing \( X_n \) corresponding to the Milstein-Tretyakov term by \( X_n + \frac{h}{2} F(\overline{X}_n) \). The updated method is
\begin{align}
  X_{n+1} &= X_n + hF(\overline{X}_n) + \hat\Phi^\Sigma_h (X_n, X_n + \frac{h}{4} F(\overline{X}_{n-1}), X_n + \frac{h}{2} F(\overline{X}_n)), \nonumber \\
    \overline{X}_n &= X_n + \frac{1}{2} \sqrt{h} \sigma \Sigma(X_n) R_n, \quad \text{with } \overline{X}_{-1} = X_0, \label{eq:modification_2}
\end{align}
with the noise integrator being one of the following options:
\begin{enumerate}
    \item Modified weak order \( 2 \) method from \cite{Abdulle_Vilmart_Zygalakis_2013}
    \begin{align*}
        \Phi_h^\Sigma (X_n, X_n^{(1)}, X_n^{(2)}) &= X_n + \frac{1}{2} \sum_{a=1}^d \bigg( \sigma \Sigma_a \big(X_n + \sigma \Sigma(X_n^{(2)})J_a \big) - \sigma \Sigma_a \big(X_n - \sigma \Sigma(X_n^{(2)})J_a \big) \bigg) \\
        &\quad + \frac{\sigma\sqrt{h}}{2} \bigg( \sigma \Sigma \big( X_n^{(1)} + \sqrt{\frac{h}{2}} \sigma \Sigma(X_n^{(2)}) \chi \big) + \sigma \Sigma \big( X_n^{(1)} - \sqrt{\frac{h}{2}} \sigma \Sigma(X_n^{(2)}) \chi \big) \bigg) R_n.
    \end{align*}
    
    \item Modified W2Ito1 method from \cite{Tang2017}
    \begin{align*}
        \Phi_h^\Sigma (X_n, X_n^{(1)}, X_n^{(2)}) &= X_n + \sqrt{h} \sum_{a=1}^d \big( - \sigma \Sigma_a(X_n) + \sigma \Sigma_a(K^{(a)}_1) + \sigma \Sigma_a(K^{(a)}_2) \big) R_{n,a} \\
        &\quad\quad + 2 \sqrt{h} \sum_{a=1}^d \big(\sigma \Sigma_a(X_n) - \sigma \Sigma_a (K^{(a)}_2) \big) \hat{J}_{a,a}, \\
        K^{(a)}_1 &= X_n^{(1)} + \frac{\sqrt{h}}{2} \sigma \Sigma_a(X_n^{(2)}) \hat{\chi}_1 + \sqrt{h} \sum_{\substack{b=1\\b \neq a}}^d \sigma \Sigma_b(X_n^{(2)}) \hat{J}_{a,b}, \\
        K^{(a)}_2 &= X_n - \frac{\sqrt{h}}{2} \sigma \Sigma_a(X_n^{(2)})\hat\chi_1.
    \end{align*}
\end{enumerate}

Further experiments that modified the coefficient \(\frac{1}{2}\) of \( F(\overline{X}_n) \) confirmed that \( X_n + \frac{h}{2} F(\overline{X}_n) \) is the optimal choice for \( X_n^{(2)} \). The updated stability region is shown in Figure~\ref{fig:stability_region_3}. However, in our broader experiments, modifications \eqref{eq:modification_1} and \eqref{eq:modification_2} did not lead to a significant improvement in stability compared to the original version \eqref{eq:new_scheme}. As a result, we opted to use for simplicity the original version \eqref{eq:new_scheme} in our numerical simulations.

\section{Numerical experiments}
\label{sec:num}

We present experiments that confirm the convergence  order of two of PVD-2 for sampling the invariant measure. 
We explore several one and two-dimensional problems as well as higher dimensional problems to emphasize that the method converges regardless of dimensionality. In the following, we fix  $\sigma=1$. 

In our experiments, we compare the performance of PVD-2, given by \eqref{eq:new_scheme}, against the following methods: Euler-Maruyama (EM), Leimkuhler-Matthews with drift correction (LMd) (referred to as Hummer-Leimkuhler-Matthews in \cite{Phillips_2024}), the Strang splitting between Runge-Kutta 4 (explicit of order 4) and W2Ito1 noise integrator (RK4[W2Ito1]) \cite{Tang2017}, and Leimkuhler-Matthews with time rescaling (LMt) \cite{Phillips_2024}. Properties of these methods are summarized in Table \ref{tab:method_summary}, below.

\begin{table}[h!]
    \centering
    \renewcommand{\arraystretch}{1.2}
    \begin{tabular}{|l|c|c|c|c|}
        \hline
        \textbf{Method} & \textbf{Weak Order} & \textbf{Sampling Order} & \textbf{\# $F$ Eval.} & \textbf{\# $\Sigma$ Eval.}  \\
        \hline
        EM & 1 & 1 & 1 & 1  \\
        \hline
        LMd (dim. $d=1$) & 1 & 1 & 1 & 1  \\
        \hline
        RK4[W2Ito1] & 2 & 2 & 8 & 3  \\
        \hline
        LMt & 1 & 2 & 1 & 1 \\
        \hline
        PVD-2[W2Ito1] & 1 (expected) & 2 (expected) & 1 & 3 \\
        \hline
        PVD-2[MT2] & 1 (expected) & 2 (expected) & 1 & 5  \\
        \hline
    \end{tabular}
    \caption{Summary of method characteristics, including the number of $F$ and $\Sigma$ evaluations per step. \textit{Sampling order} is the order of sampling of the invariant measure, i.e. $p$ in equation \eqref{eq:invpexp}. }
    \label{tab:method_summary}
\end{table}

Note that LMd does not converge for general variable diffusion in dimensions larger than $d=1$. The method RK4[W2Ito1] uses Strang splitting, doubling the number of force evaluations, giving a total of $4 \times 2 = 8$. The same order could be achieved with fewer $F$ evaluations, however, our aim is to compare against a highly-accurate integrator as a challenging baseline. Method LMt uses a constant stepsize $h$ in a transformed time variable $\tau(t)$. For plots, we display error curves with an effective step size $h' = h \left\langle \frac{dt}{d\tau} \right\rangle$, where $\left<\cdot\right>$ denotes a trajectory average. The number of $\Sigma$ evaluations for PVD-2[W2Ito1] and PVD-2[MT2] comes directly from the number of $\Sigma$ evaluations for the noise integrator method, see Section \ref{sec:new_method}.

\subsection{One dimension}
\label{sec:OneDimensionalProblems}

\begin{figure}[b!]
\includegraphics[width=1.0\linewidth]{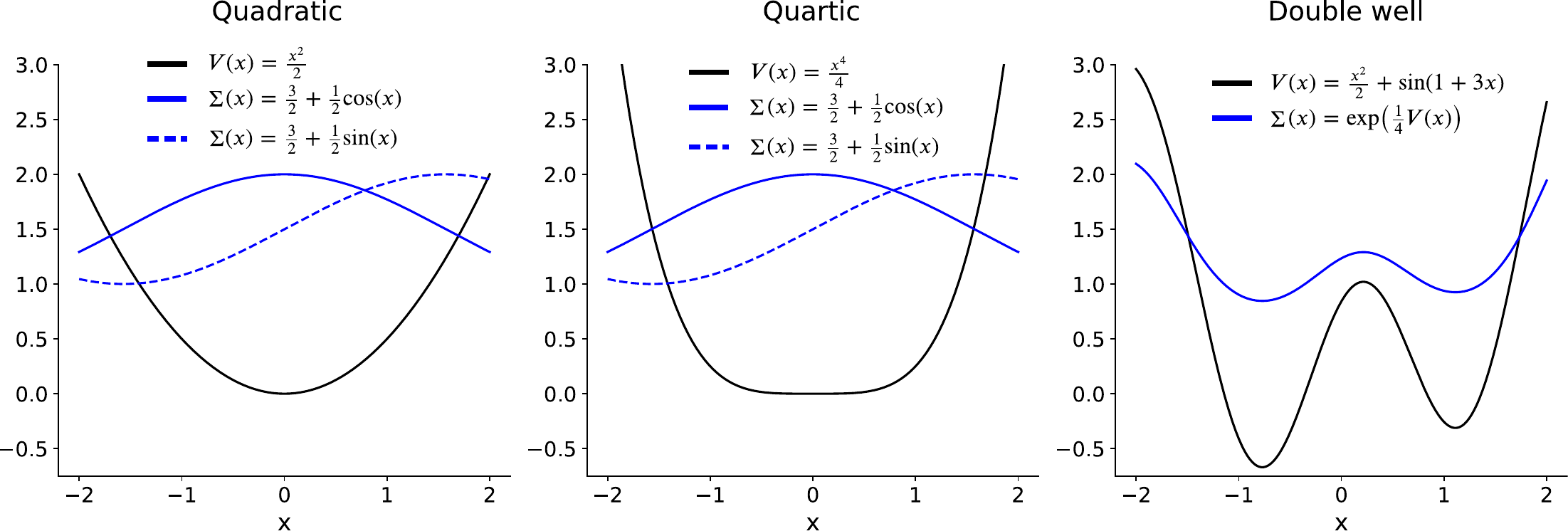}
\caption{The potentials and diffusion coefficients used in one-dimensional experiments. \label{fig:1dV}}
\end{figure}

We consider three potentials of increasing complexity, namely the quadratic potential $V(x) = x^2/2$, the quartic potential $V(x) = x^4/4$ and the asymmetric double-well potential $V(x) = x^2/2 + \sin(1 + 3x)$. For the first two potentials we consider cosine diffusion $\Sigma(x) = \frac{3}{2} + \frac{1}{2}\cos(x)$ and sine diffusion $\Sigma(x) = \frac{3}{2} + \frac{1}{2}\sin(x)$. For the double-well, we consider diffusion of the form $\Sigma(x) = \exp(\frac{1}{4} V(x))$. 
These potentials and diffusion coefficients are illustrated in Figure~\ref{fig:1dV}.
Under mild conditions on $V(x)$, diffusion of the form $\Sigma(x) \propto \exp(2 \sigma^{-2} V(x))$ for $\sigma > 0$ is known to be nearly optimal diffusion for enhancing the crossing rate between metastable wells \cite{Lelievre_Pavliotis_Robin_Santet_Stoltz_2024, Phillips_2024}. Note that $f = - \nabla V$ is globally Lipschitz for the quadratic and double-well potentials, but not for the quartic potential, making it an interesting test case.

For computing the $L_1$ error, we divide the subset $[-5, 5]$ of the $x$-domain into $M=30$ bins and for a fixed $T$ compute the mean error:
\begin{align}
\label{eqn:computingL1Error}
\text{Error}(h, T) \vcentcolon= \frac{1}{M} \sum_{i=1}^M \vert \omega_i - \hat{\omega}_i(h, T) \vert,
\end{align}
where $\omega_i$ is the exact occupancy probability of the $i^{th}$ interval and $\hat{\omega}_i$ is the empirical estimate when running trajectories with fixed stepsize $h$ up to a final time $T$. In all experiments, we set $T = 5 \times 10^7$ and ran each integrator using time steps starting from $10^{-2}$, increasing by a factor of $10^{0.1}$ at each step until the method became unstable. Results for the quadratic, quartic and double-well are shown in Figures \ref{fig:quadraticInvariantMeasure}, \ref{fig:quarticInvariantMeasure} and \ref{fig:doubleWellInvariantMeasure}, respectively. For all curves, we also display an estimate of the Monte-Carlo error in the bias (shaded areas), considering the standard deviation of $10$ independent trajectories. 

Both variants of PVD-2 consistently achieve second-order convergence across various environments, including challenging non-globally Lipschitz cases like the quartic potential. They also yield lower errors than RK4[W2Ito1], using only one force evaluation per step versus RK4[W2Ito1]’s eight. Nevertheless, in one-dimension, time-rescaling combined with the Leimkuhler-Matthews method (LMt) consistently results in the lowest error for any given stepsize. This highlights the importance of transforming a multiplicative noise to additive whenever possible \cite{Phillips_2024}. Note, however, that LMt can only be applied in the multiplicative setting for isotropic diffusion. For this reason, we exclude this method from our higher-dimensional benchmarks.   

\begin{figure}
    \centering
    \includegraphics[width=0.9\linewidth]{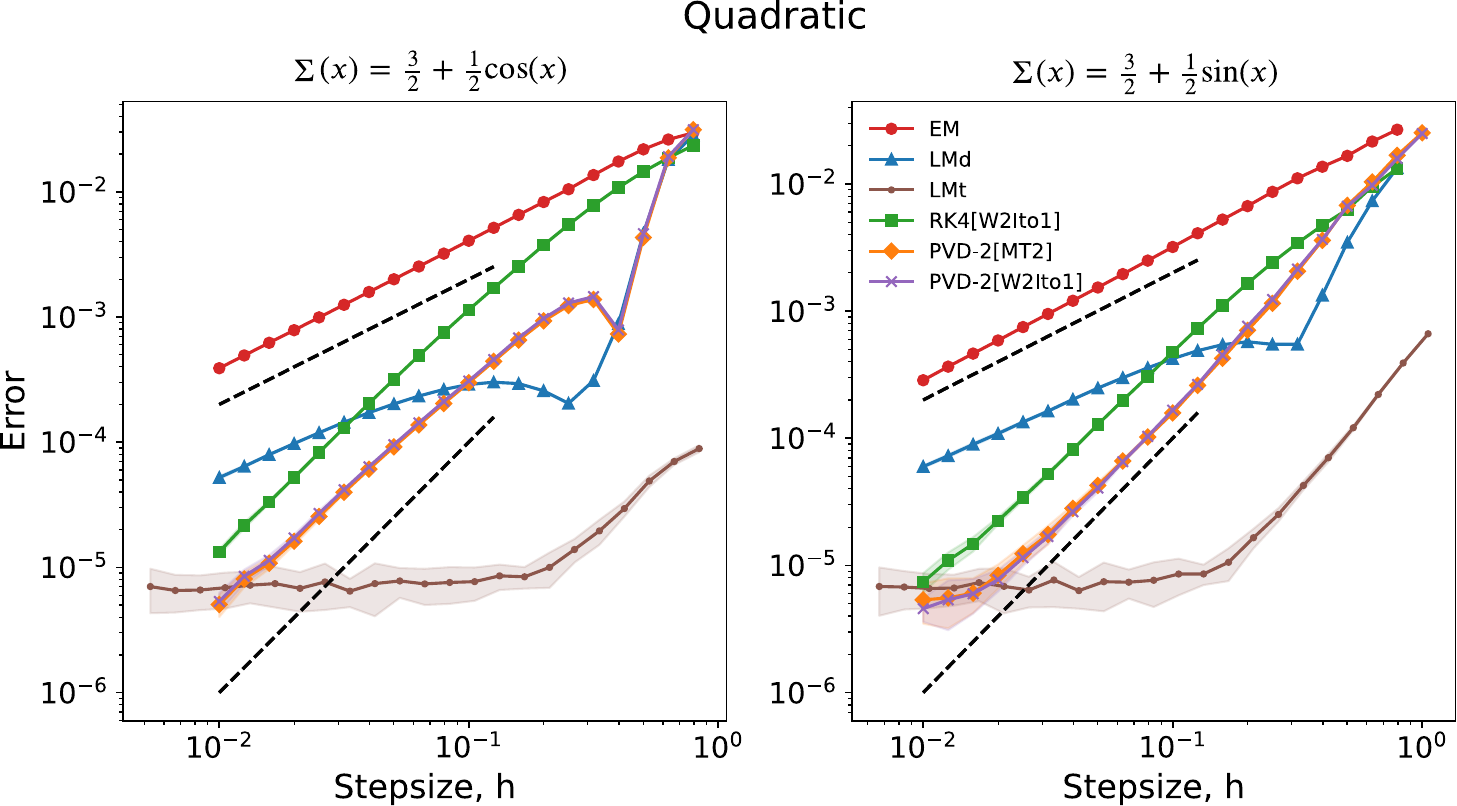}
    \caption{Convergence for sampling the invariant measure in a quadratic potential $V(x) = x^2 / 2$. Left: diffusion $\Sigma(x) = \frac{3}{2} + \frac{1}{2} \cos(x)$. Right: diffusion $\Sigma(x) = \frac{3}{2} + \frac{1}{2} \sin(x)$.}
    \label{fig:quadraticInvariantMeasure}
\end{figure}

\begin{figure}
    \centering
    \includegraphics[width=0.9\linewidth]{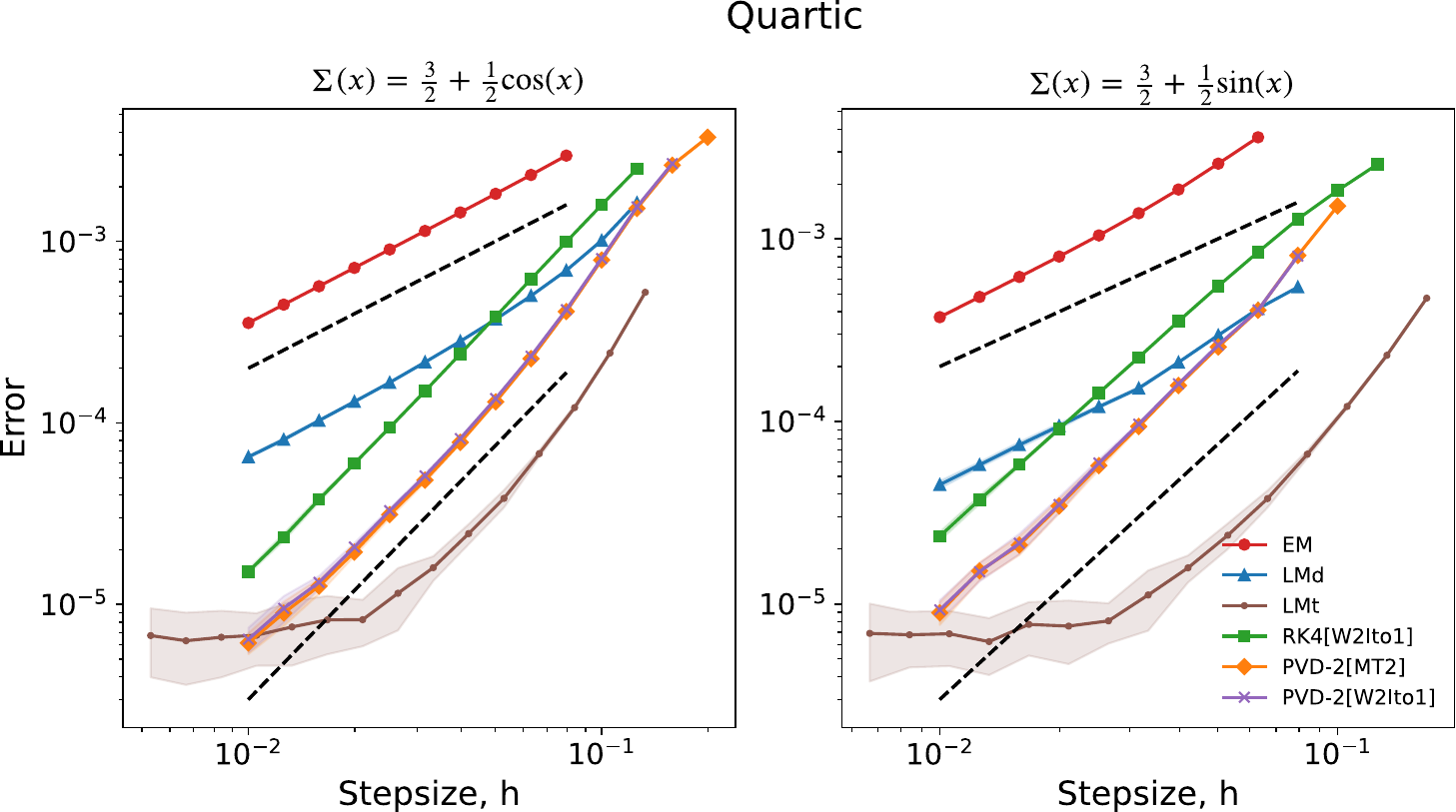}
    \caption{Convergence for sampling the invariant measure in a quartic potential $V(x) = x^4 / 4$. Left: diffusion $\Sigma(x) = \frac{3}{2} + \frac{1}{2} \cos(x)$. Right: diffusion $\Sigma(x) = \frac{3}{2} + \frac{1}{2} \sin(x)$.}
    \label{fig:quarticInvariantMeasure}
\end{figure}

\begin{figure}
    \centering
    \includegraphics[width=0.9\linewidth]{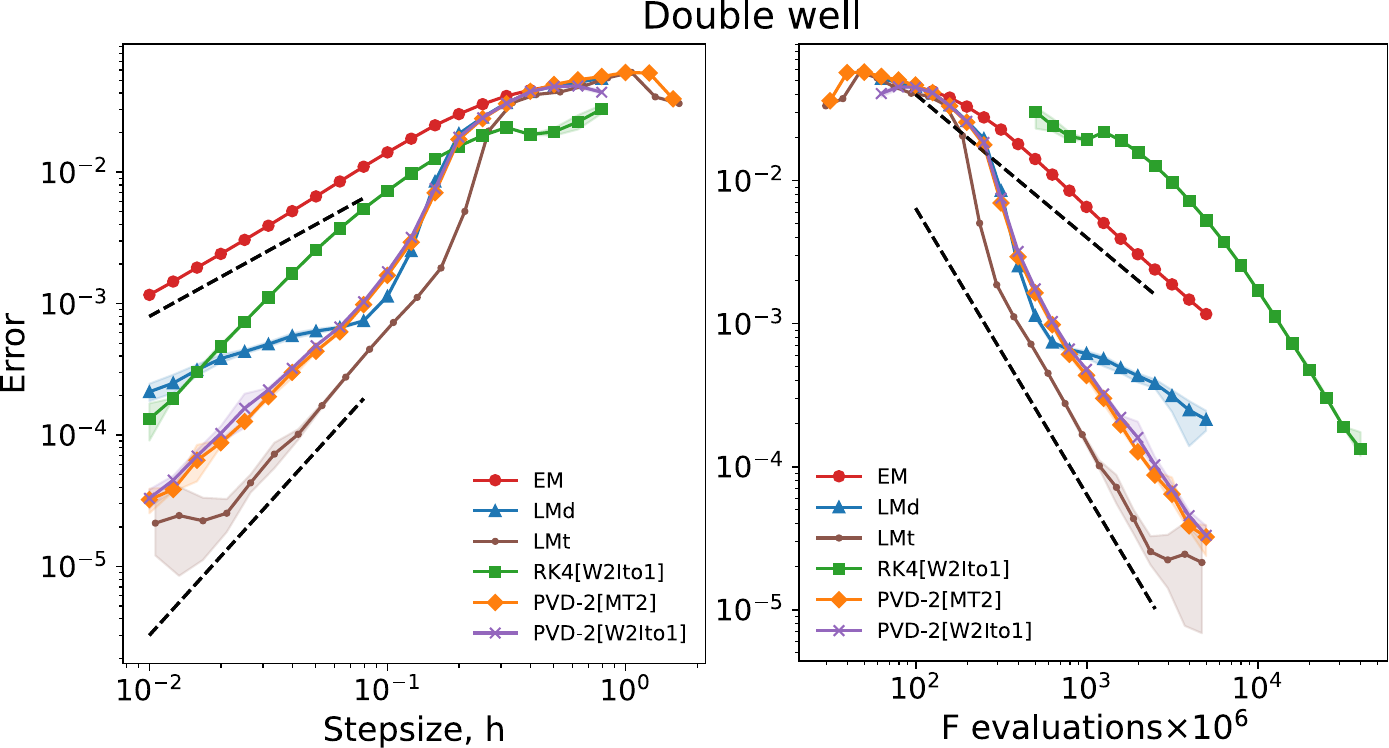}
    \caption{Convergence for sampling the invariant measure in a double-well potential $V(x) = \frac{x^2}{2} + \sin(1+3x)$ with diffusion $\Sigma(x) = \exp{\left(\frac{1}{4}V(x)\right)}$. The left figure shows the error convergence against stepsize and the right figure shows the error convergence against the number of $F$ evaluations, a proxy for computational cost.}
    \label{fig:doubleWellInvariantMeasure}
\end{figure}

\subsection{Two dimensions}

\label{sec:twoDimensions}

We consider a 4-well potential given by
\begin{equation}
\label{eqn:quadrupleWellPotential}
V(x_1,x_2) = \sqrt{\frac{17}{16} -2x_1^2 + x_1^4} + \sqrt{\frac{17}{16} - 2x_2^2 + x_2^4},
\end{equation}
along with four diffusion tensors of increasing complexity:

\begin{enumerate}
    \item[(\textbf{A})] \textit{Constant}: A constant, anisotropic diffusion:
    \[\Sigma_A(x_1,x_2) = \begin{bmatrix}
    2 & 0 \\
    0 & \frac{3}{2}
    \end{bmatrix}.\]

    \item[(\textbf{B})] \textit{Isotropic I}: Non-homogeneous diffusion given by the Moro-Cardin tensor \cite{Moro_1998} which impedes convergence due to the low level of noise in the central high-potential region: 
    
    \[\Sigma_B(x_1,x_2) = \left( 1 + A \exp \left(-\frac{\|x\|^2}{2 \epsilon^2}\right)\right)^{-1} I, \quad
    x=\begin{bmatrix}  x_1 \\ x_2 \end{bmatrix},\]
    where $A=5$ and $\epsilon = 0.3$.

    \item[(\textbf{C})] \textit{Isotropic II}: Non-homogeneous diffusion which aids convergence due to the high level of noise in the central high-potential region:
    \[\Sigma_C(x_1,x_2) = \left( 1 + A \exp \left(-\frac{\|x\|^2}{2 \epsilon^2}\right)\right) I, \quad
    x=\begin{bmatrix}  x_1 \\ x_2 \end{bmatrix},\]
    where $A=1$ and $\epsilon = 0.3$.

    \item[(\textbf{D})] \textit{Anisotropic}: An anisotropic diffusion given by
    \[
    \Sigma_D(x_1,x_2) = I - \frac{x x^T}{2\|x\|^2 + 1},\quad
    x=\begin{bmatrix}  x_1 \\ x_2 \end{bmatrix},
    \]
    which can be written in terms of the planar angle $\theta = \text{arg}(x)$ as
    \[\Sigma_D(x_1,x_2) = I - \frac{\| x \|^2}{2\|x\|^2 + 1}\begin{bmatrix}
\cos^2(\theta) & \cos(\theta)\sin(\theta) \\
\cos(\theta)\sin(\theta) & \sin^2(\theta)
\end{bmatrix}.\]
    The pre-factor of $\frac{\| x \|^2}{2\| x \|^2 + 1}$ ensures that $\theta$-dependent component vanishes at $x = 0$, thus guaranteeing that $\Sigma_D$ is everywhere smooth.
\end{enumerate}

\begin{figure}[t!]
    \centering
    \includegraphics[width=\linewidth]{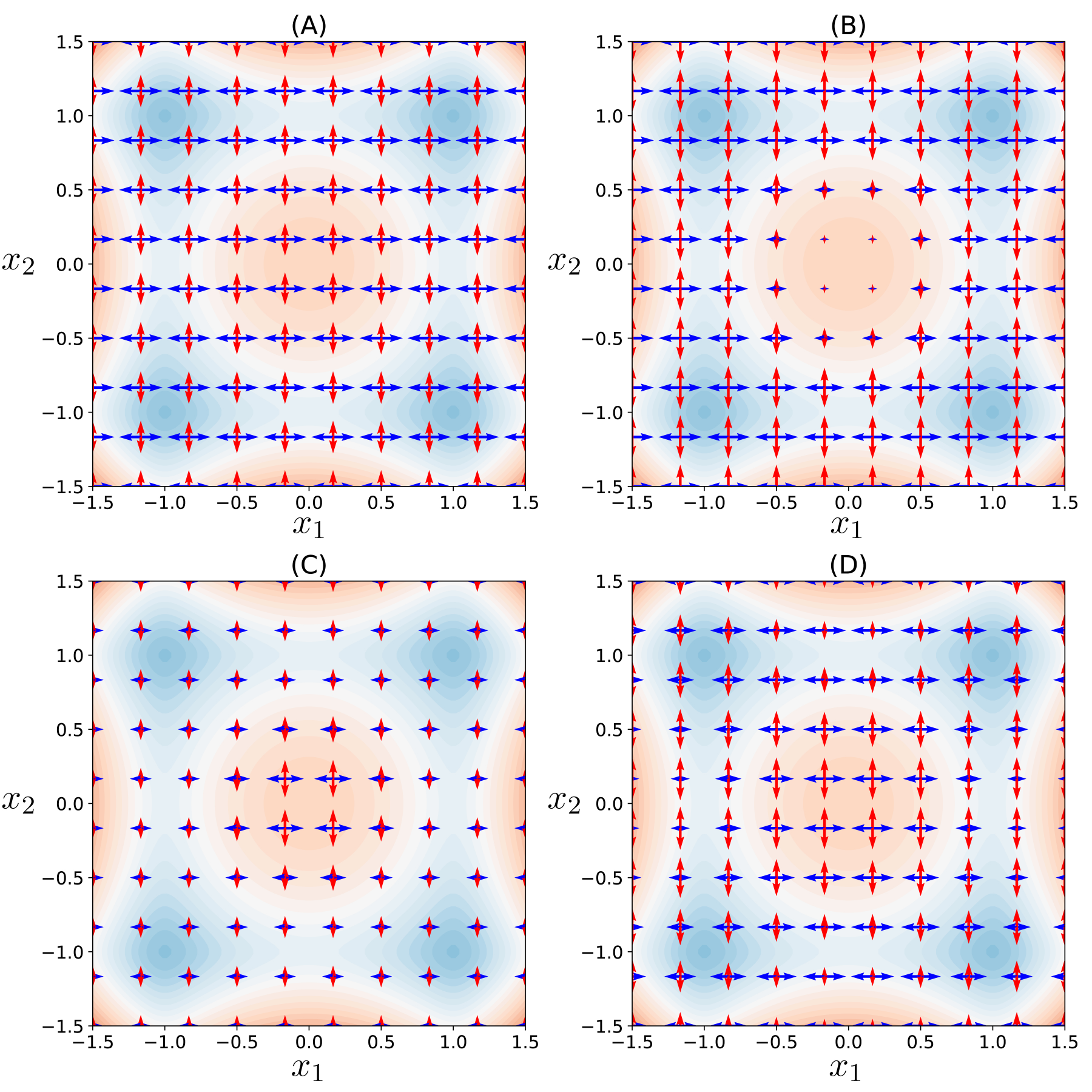}
    \caption{
Contours of the quadruple well potential \eqref{eqn:quadrupleWellPotential} are shown with the various diffusion tensor fields. Diffusion fields $\Sigma(x_1,x_2)$ are visualised by the magnitude of the expected noise increment in the $x_1$
(blue) and $x_2$ (red) directions shown at each grid point. For better comparison, the diffusion arrows in (B) and (D) are scaled by a factor of 2 compared to  (A) and (C).}
    \label{fig:quadrupleWellDiffusions}
\end{figure}

In Figure \ref{fig:quadrupleWellDiffusions}, we visualise how these diffusion tensors vary relative to the energy contours of \eqref{eqn:quadrupleWellPotential}. To measure convergence, we compute the $L_1$ error of the square-norm observable:
\[
\text{Error}(h, N, T) \vcentcolon= \left\vert O - \hat{O}(h,N,T) \right\vert,
\]
where $O = \int (x_1^2 + x_2^2) \rho_\infty(x_1, x_2) dx_1 dx_2$ is the exact square-norm average and $\hat{O}$ is the empirical estimate when running $N$ trajectories with fixed stepsize $h$ and averaging the value of the observable at time $T$. We fix $N=10^5$ and $T = 30$ and ran each integrator with fixed step sizes $h \in \{10^{-2}, 10^{-1.9}, \dots, 10^{-0.1}, 10^{0.0}\}$. Results are shown in Figure \ref{fig:quadrupleWellConvergencePlots}.

Note that in all cases, PVD-2 is the best performing integrator for small stepsizes, within standard error. However, metastability is more severe in the quadruple-well compared to one-dimensional problems (Section \ref{sec:OneDimensionalProblems}). This is especially true for diffusion tensor Isotropic I (B). Here, diffusion vanishes over the central maximum, inhibiting well transitions. This highlights the loss of second-order convergence (for any method) in Figure \ref{fig:quadrupleWellConvergencePlots}(B); the simulation time $T$ is too short to observe complete sampling. In our numerical tests, increasing $T$ did not improve these convergence rates, suggesting that the temporal convergence to equilibrium is very slow for the considered diffusion tensor.

\begin{figure}[t!]
    \centering
    \includegraphics[width=1.0\linewidth]{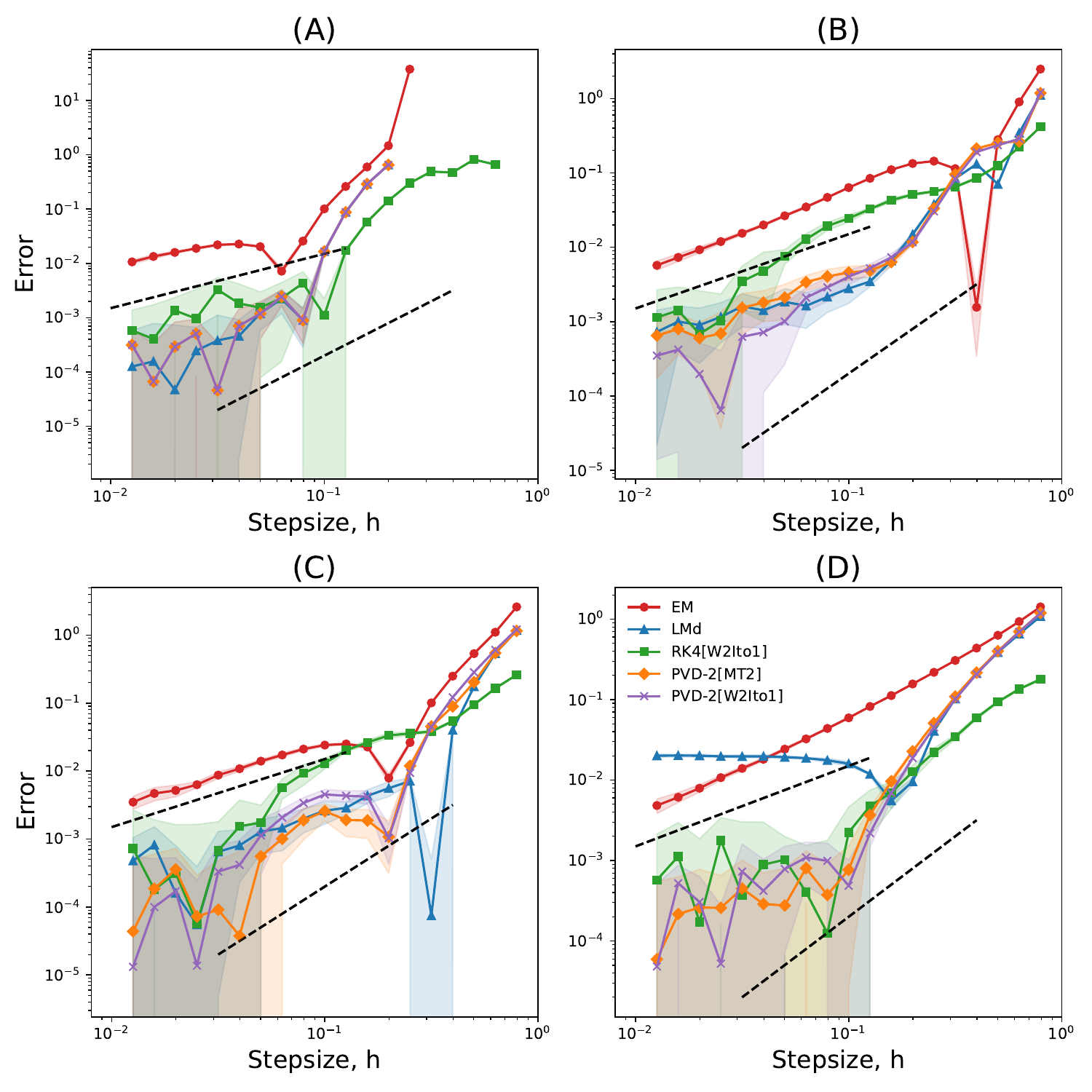}
    \caption{Convergence for sampling the invariant measure in the 2D quadruple-well potential for the four diffusion tensors depicted in Figure \ref{fig:quadrupleWellDiffusions}. Note that in (A), both variants of PVD-2 perform identically hence only PVD-2[W2Ito1] shows.}
    \label{fig:quadrupleWellConvergencePlots}
\end{figure}

\subsection{Higher dimensions}

\begin{figure}[t!]
    \centering
    \includegraphics[width=1.0\linewidth]{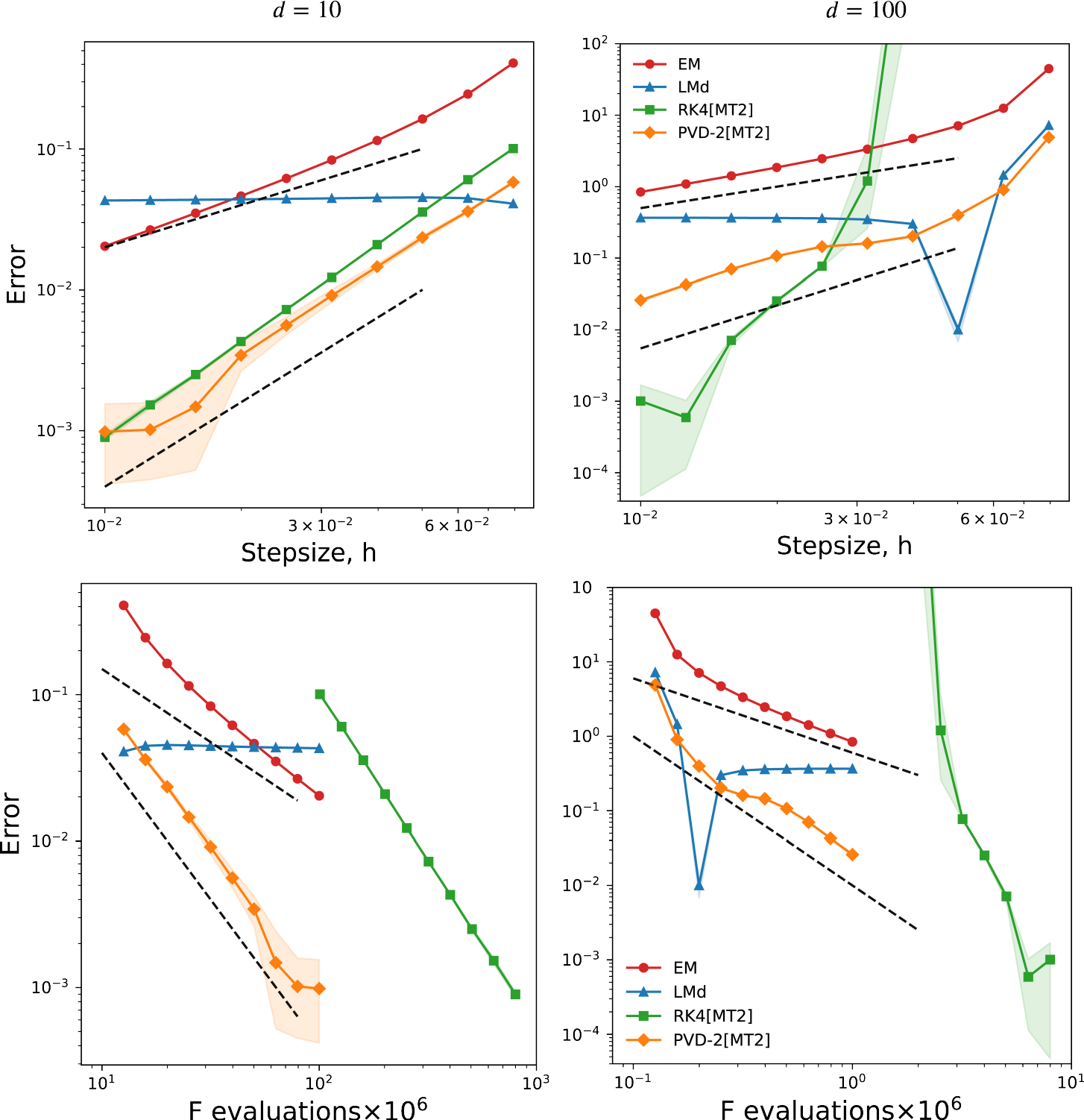}
    \caption{Convergence for sampling the observable $\|x\|$ in the ring-potential \eqref{eqn:ringPotential} with diffusion tensor given by \eqref{eqn:highDimD}. Left: dimension $d=10$. Right: dimension $d=100$.} 
    \label{fig:enter-label}
\end{figure}

We consider the high-dimensional ring potential, which was used as a test problem in \cite[Sect.\thinspace 7]{Sanz-Serna_2014}.
For $x \in \mathbb{R}^d$, the potential is given by
\begin{align}
\label{eqn:ringPotential}
V(x) = \frac{1}{2}k(1 - \| x \|)^2,
\end{align}
where $\| x \| \vcentcolon = \sqrt{\sum_{i=1}^d x_i^2}$ is the $L_2$ norm of $x$. We use $k=50$ and run experiments for $d=10$ and $d=100$. For the diffusion tensor, we use a rank-one update to the identity matrix which inhibits diffusion in the radial direction, given by 
\begin{align}
\label{eqn:highDimD}
\Sigma(x) = I - \frac{x x^T}{ 2 \| x \|^2}.
\end{align}
This diffusion tensor is non-smooth at $x=0$. However, in high dimensions, and for large $k$, the invariant measure of the ring potential is highly concentrated near the unit sphere and there is thus negligible probability mass near this point. We therefore might still expect to observe second-order convergence when using PVD-2. 

To measure convergence, we compute the $L_1$ error of the square-norm observable:
\[
\text{Error}(h, T) \vcentcolon= \left\vert O - \hat{O}(h,T) \right\vert,
\]
where $O = \int (\sum_{i=1}^d x_i^2) \rho_\infty(x) dx$ is the exact square-norm average and $\hat{O}$ is the empirical estimate when running trajectories with fixed stepsize $h$ up to a final time $T$. For $d=10$ experiments, we set $T=10^6$. For $d=100$, $T=10^4$. We compare the performance of the same integrators as in Section \ref{sec:twoDimensions}, except instead of W2Ito1 we use MT2 with Runge-Kutta 4 Strang splitting (RK4[MT2]) \cite{Abdulle_Vilmart_Zygalakis_2013}. In our low-dimensional experiments, W2Ito1 and MT2 performed very similarly, however MT2 is simpler to implement for large $d$.  

In both high-dimensional experiments, we recovered a second order convergence curve for our new method. In moderate dimensions ($d=10$), we observe that PVD-2[MT2] even outperforms RK4[MT2], whilst requiring only a single $F$ evaluation per step, instead of eight. In high dimensions ($d=100$), although RK4[MT2], it has much more limited stability that all other tested methods. In contrast, LMd does not converge in the multivariate setting.


\section*{Conclusion}
In this article, we introduced a framework for the construction of a high-order method for sampling the invariant measure of Brownian dynamics with variable diffusion tensor. 
We showed that we can obtain order two with only one force evaluation per time step in spite of the numerous algebraic order conditions.
This work creates new possibilities for accelerating convergence to a target invariant distribution by considering specific diffusion tensors generated in the recent literature.
New interesting questions arise, such as treating various diffusion tensors
important in applications, to (i) address also the reduction of the asymptotic sampling variance, and (ii) take into account the possible stiffness and the application specific structure (e.g. sparsity) of the diffusion tensor.

\section*{Acknowledgements}
The authors would like to thank Charles Matthews for helpful discussions. 
E.B. and G.V. where partially supported by the Swiss National Science Foundation, projects No 200020\_214819 and No. 200020\_192129. 
This work was supported by the United Kingdom Research and Innovation (grant EP/S02431X/1), UKRI Centre for Doctoral Training in Biomedical AI at the University of Edinburgh, School of Informatics. For the purpose of open access, the authors have applied a Creative Commons attribution (CC BY) licence to any author-accepted manuscript version arising. The computations were performed at the University of Edinburgh, and the University of Geneva on the Baobab cluster using the Julia programming language.


\begin{thebibliography}{}

\end{thebibliography}


\begin{thebibliography}{10}

\bibitem{SKROCK}
A.~Abdulle, I.~Almuslimani, and G.~Vilmart.
\newblock Optimal explicit stabilized integrator of weak order 1 for stiff and
  ergodic stochastic differential equations.
\newblock {\em SIAM/ASA J. Uncertain. Quantif.}, 6(2):937--964, 2018.

\bibitem{Abdulle_Pavliotis_Vilmart_2009}
A.~Abdulle, G.~A. Pavliotis, and G.~Vilmart.
\newblock Accelerated convergence to equilibrium and reduced asymptotic
  variance for {L}angevin dynamics using {S}tratonovich perturbations.
\newblock {\em C. R. Math. Acad. Sci. Paris}, 357(4):349--354, 2019.

\bibitem{Abdulle_Vilmart_Zygalakis_2013}
A.~Abdulle, G.~Vilmart, and K.~C. Zygalakis.
\newblock Weak second order explicit stabilized methods for stiff stochastic
  differential equations.
\newblock {\em SIAM J. Sci. Comput.}, 35(4):A1792--A1814, 2013.

\bibitem{Abdulle_Vilmart_Zygalakis_14}
A.~Abdulle, G.~Vilmart, and K.~C. Zygalakis.
\newblock High order numerical approximation of the invariant measure of
  ergodic {SDE}s.
\newblock {\em SIAM J. Numer. Anal.}, 52(4):1600--1622, 2014.

\bibitem{Abdulle_Vilmart_Zygalakis_15}
A.~Abdulle, G.~Vilmart, and K.~C. Zygalakis.
\newblock Long time accuracy of {L}ie-{T}rotter splitting methods for
  {L}angevin dynamics.
\newblock {\em SIAM J. Numer. Anal.}, 53(1):1--16, 2015.

\bibitem{Best_Hummer_2010}
R.~B. Best and G.~Hummer.
\newblock Coordinate-dependent diffusion in protein folding.
\newblock {\em Proc. Natl. Acad. Sci. U.S.A.}, 107(3):1088--1093, 2010.

\bibitem{Bou-Rabee_Owhadi_2010}
N.~Bou-Rabee and H.~Owhadi.
\newblock Long-run accuracy of variational integrators in the stochastic
  context.
\newblock {\em SIAM J. Numer. Anal.}, 48(1):278--297, 2010.

\bibitem{Debussche_2025}
C.-E. Br\'ehier, A.~Debussche, A.~Laurent, and G.~Vilmart.
\newblock Analysis of preconditioned schemes for {SPDE}s invariant distribution
  sampling.
\newblock {\em In preparation}, 2025.

\bibitem{Brehier_Vilmart_2016}
C.-E. Br\'ehier and G.~Vilmart.
\newblock High order integrator for sampling the invariant distribution of a
  class of parabolic stochastic {PDE}s with additive space-time noise.
\newblock {\em SIAM J. Sci. Comput.}, 38(4):A2283--A2306, 2016.

\bibitem{Bronasco22}
E.~Bronasco.
\newblock Exotic {B}-series and {S}-series: algebraic structures and order
  conditions for invariant measure sampling.
\newblock {\em Found. Comput. Math. (published electronically, to appear)},
  2024.
\newblock arXiv:2209.11046, 31 pages.

\bibitem{Burrage_Burrage}
K.~Burrage, P.~M. Burrage, and T.~Tian.
\newblock Numerical methods for strong solutions of stochastic differential
  equations: an overview.
\newblock In {\em Stochastic analysis with applications to mathematical
  finance}, volume 460, pages 373--402. Proc. R. Soc. Lond. Ser. A Math. Phys.
  Eng. Sci., 2004.

\bibitem{Butcher_1969}
J.~C. Butcher.
\newblock The effective order of {R}unge-{K}utta methods.
\newblock In {\em Conf. on {N}umerical {S}olution of {D}ifferential {E}quations
  ({D}undee, 1969)}, volume Vol. 109 of {\em Lecture Notes in Math.}, pages
  133--139. Springer, Berlin-New York, 1969.

\bibitem{Cui_Tong_Zahm_2024}
T.~Cui, X.~Tong, and O.~Zahm.
\newblock Optimal {R}iemannian metric for {P}oincar\'e inequalities and how to
  ideally precondition {L}angevin dymanics.
\newblock {\em Preprint}, 2024.
\newblock arXiv:2404.02554 (28 pages).

\bibitem{Duncan_Lelievre_Pavliotis_2016}
A.~B. Duncan, T.~Leli\`evre, and G.~A. Pavliotis.
\newblock Variance reduction using nonreversible {L}angevin samplers.
\newblock {\em J. Stat. Phys.}, 163(3):457--491, 2016.

\bibitem{Duncan_Nusken_Pavliotis_2017}
A.~B. Duncan, N.~N\"usken, and G.~A. Pavliotis.
\newblock Using perturbed underdamped {L}angevin dynamics to efficiently sample
  from probability distributions.
\newblock {\em J. Stat. Phys.}, 169(6):1098--1131, 2017.

\bibitem{Girolami_Calderhead_2011}
M.~Girolami and B.~Calderhead.
\newblock Riemann manifold {L}angevin and {H}amiltonian {M}onte {C}arlo
  methods.
\newblock {\em J. R. Stat. Soc. Ser. B Stat. Methodol.}, 73(2):123--214, 2011.
\newblock With discussion and a reply by the authors.

\bibitem{Graham_Thiery_Beskos_2022}
M.~M. Graham, A.~H. Thiery, and A.~Beskos.
\newblock Manifold {M}arkov chain {M}onte {C}arlo methods for {B}ayesian
  inference in diffusion models.
\newblock {\em J. R. Stat. Soc. Ser. B. Stat. Methodol.}, 84(4):1229--1256,
  2022.

\bibitem{Higham}
D.~J. Higham.
\newblock Mean-square and asymptotic stability of the stochastic theta method.
\newblock {\em SIAM J. Numer. Anal.}, 38(3):753--769, 2000.

\bibitem{Hummer2005PositiondependentDC}
G.~Hummer.
\newblock Position-dependent diffusion coefficients and free energies from
  bayesian analysis of equilibrium and replica molecular dynamics simulations.
\newblock {\em New J. of Phys.}, 7:34 -- 34, 2005.

\bibitem{Hummer_2018}
G.~Hummer.
\newblock Private communication to {B}. {L}eimkuhler, October 2018.
\newblock Unpublished.

\bibitem{Laurent_2019}
A.~Laurent and G.~Vilmart.
\newblock Exotic aromatic {B}-series for the study of long time integrators for
  a class of ergodic {SDE}s.
\newblock {\em Math. Comp.}, 89(321):169--202, 2020.

\bibitem{Leimkuhler_Matthews}
B.~Leimkuhler and C.~Matthews.
\newblock Rational construction of stochastic numerical methods for molecular
  sampling.
\newblock {\em Appl. Math. Res. Express. AMRX}, (1):34--56, 2013.

\bibitem{Leimkuhler_Matthews_Tretyakov}
B.~Leimkuhler, C.~Matthews, and M.~V. Tretyakov.
\newblock On the long-time integration of stochastic gradient systems.
\newblock {\em Proc. R. Soc. Lond. Ser. A Math. Phys. Eng. Sci.},
  470(2170):20140120, 16, 2014.

\bibitem{Lelievre_Nier_Pavliotis_2013}
T.~Leli\`evre, F.~Nier, and G.~A. Pavliotis.
\newblock Optimal non-reversible linear drift for the convergence to
  equilibrium of a diffusion.
\newblock {\em J. Stat. Phys.}, 152(2):237--274, 2013.

\bibitem{Lelievre_Pavliotis_Robin_Santet_Stoltz_2024}
T.~Leli\`evre, G.~A. Pavliotis, G.~Robin, R.~Santet, and G.~Stoltz.
\newblock Optimizing the diffusion coefficient of overdamped {L}angevin
  dynamics.
\newblock {\em Preprint}, 2024.
\newblock arXiv:2404.12087 (76 pages).

\bibitem{Lelievre_Santet_Stoltz_2024}
T.~Leli\`evre, R.~R.~Santet, and G.~Stoltz.
\newblock Improving sampling by modifying the effective diffusion.
\newblock {\em Preprint}, 2024.
\newblock arXiv:2410.00525 (33 pages).

\bibitem{Lelievre_Santet_Stoltz_2024B}
T.~Leli\`evre, R.~Santet, and G.~Stoltz.
\newblock Unbiasing {H}amiltonian {M}onte {C}arlo algorithms for a general
  {H}amiltonian function.
\newblock {\em Found. Comput. Math. (published electronically, to appear)},
  2024.
\newblock arXiv:2303.15918, 63 pages.

\bibitem{Lu_Spiliopoulos_2018}
J.~Lu and K.~Spiliopoulos.
\newblock Analysis of multiscale integrators for multiple attractors and
  irreversible {L}angevin samplers.
\newblock {\em Multiscale Model. Simul.}, 16(4):1859--1883, 2018.

\bibitem{Mattingly_Stuart_Higham_2002}
J.~C. Mattingly, A.~M. Stuart, and D.~J. Higham.
\newblock Ergodicity for {SDE}s and approximations: locally {L}ipschitz vector
  fields and degenerate noise.
\newblock {\em Stochastic Process. Appl.}, 101(2):185--232, 2002.

\bibitem{Milstein_Tretyakov_2004}
G.~N. Milstein and M.~V. Tretyakov.
\newblock {\em Stochastic numerics for {M}athematical {P}hysics}.
\newblock Scientific Computing. Springer-Verlag, Berlin and New York, 2004.

\bibitem{Moro_1998}
G.~J. Moro and F.~Cardin.
\newblock Saddle point avoidance due to inhomogeneous friction.
\newblock {\em Chem. Phys.}, 235(1):189--200, Sept. 1998.

\bibitem{Pavliotis_2014}
G.~A. Pavliotis.
\newblock {\em Stochastic processes and applications}, volume~60 of {\em Texts
  in Applied Mathematics}.
\newblock Springer, New York, 2014.
\newblock Diffusion processes, the Fokker-Planck and Langevin equations.

\bibitem{Phillips_2024}
D.~Phillips, B.~Leimkuhler, and C.~Matthews.
\newblock Numerics with coordinate transforms for efficient {Brownian} dynamics
  simulations.
\newblock {\em Mol. Phys.}, 0(0):e2347546, 2024.

\bibitem{ReyBellet_Spiliopoulos_2016}
L.~Rey-Bellet and K.~Spiliopoulos.
\newblock Improving the convergence of reversible samplers.
\newblock {\em J. Stat. Phys.}, 164(3):472--494, 2016.

\bibitem{Roberts_Stramer_2002}
G.~O. Roberts and O.~Stramer.
\newblock Langevin diffusions and {M}etropolis-{H}astings algorithms.
\newblock volume~4, pages 337--357. Methodol. Comput. Appl. Probab., 2002.
\newblock International Workshop in Applied Probability (Caracas, 2002).

\bibitem{Roessler_2009}
A.~R{\"o}\ss{}ler.
\newblock Second order {R}unge-{K}utta methods for {I}t\^o{} stochastic
  differential equations.
\newblock {\em SIAM J. Numer. Anal.}, 47(3):1713--1738, 2009.

\bibitem{Saito_Mitsui}
Y.~Saito and T.~Mitsui.
\newblock Mean-square stability of numerical schemes for stochastic
  differential systems.
\newblock {\em Vietnam J. Math.}, 30:551--560, 2002.

\bibitem{Sanz-Serna_2014}
J.~M. Sanz-Serna.
\newblock Markov chain {M}onte {C}arlo and numerical differential equations.
\newblock In {\em Current challenges in stability issues for numerical
  differential equations}, volume 2082 of {\em Lecture Notes in Math.}, pages
  39--88. Springer, Cham, 2014.

\bibitem{Talay_Tubaro_1990}
D.~Talay and L.~Tubaro.
\newblock Expansion of the global error for numerical schemes solving
  stochastic differential equations.
\newblock {\em Stochastic Anal. Appl.}, 8(4):483--509, 1990.

\bibitem{Tang2017}
X.~Tang and A.~Xiao.
\newblock Efficient weak second-order stochastic {R}unge-{K}utta methods for
  {I}t\^o{} stochastic differential equations.
\newblock {\em BIT}, 57(1):241--260, 2017.

\bibitem{Tocino}
A.~Tocino.
\newblock Mean-square stability of second-order {R}unge-{K}utta methods for
  stochastic differential equations.
\newblock {\em J. Comput. Appl. Math.}, 175(2):355--367, 2005.

\bibitem{Vilmart_2015}
G.~Vilmart.
\newblock Postprocessed integrators for the high order integration of ergodic
  {SDE}s.
\newblock {\em SIAM J. Sci. Comput.}, 37(1):A201--A220, 2015.

\bibitem{Zhang_Marzouk_Spiliopoulos_2022}
B.~J. Zhang, Y.~M. Marzouk, and K.~Spiliopoulos.
\newblock Geometry-informed irreversible perturbations for accelerated
  convergence of {L}angevin dynamics.
\newblock {\em Stat. Comput.}, 32(5):Paper No. 78, 22, 2022.

\end{thebibliography}

\end{document}